\def\eu{\mathfrak}
\def\ma{\mathbb}
\def\lam#1{k(\Lambda_#1)}
\def\g#1#2{#1_{{\eu {ge}}#2}}
\def\cic#1{{\ma Q}(\zeta_{#1})}
\def\G#1{\big(R_T/(#1)\big)^{\ast}}
\def\S#1{S_{\infty}(#1)}
\def\p{{\eu p}_{\infty}}
\def\raiz#1{\sqrt[l]{P_{#1}}}
\def\raizm#1{\sqrt[l]{(-1)^{\deg P_{#1}}P_{#1}}}
\def\f{{\ma F}_q^{\ast}}
\def\Witt#1{\stackrel{_{\bullet}}{#1}}
\def\vWitt#1{\big(#1^{(1)},\ldots,#1^{(n)}\big)}
\def\ne{({\ma F}_q^{\ast})^l}
\newcommand{\Gal}{\operatorname{Gal}}
\newcommand{\Hom}{\operatorname{Hom}}
\newcommand{\menos}{\operatornamewithlimits{--}}
\newcounter{bean}
\def\l{
\begin{list}
{\rm{(\alph{bean}).-}}{\usecounter{bean}
\setlength{\labelwidth}{0.8in}
\setlength{\labelsep}{0.3cm}
\setlength{\leftmargin}{1cm}}}
\numberwithin{equation}{section}
\newtheorem{theorem}{Theorem}[section]
\newtheorem{proposition}[theorem]{Proposition}
\newtheorem{lemma}[theorem]{Lemma}
\newtheorem{example}[theorem]{Example}
\newtheorem{remark}[theorem]{Remark}
\newtheorem{definition}[theorem]{Definition}
\title[Genus Fields in Congruence Function Fields]
{Genus Fields of Abelian Extensions of
Congruence Rational Function Fields}
\author[M. Maldonado]{Myriam Maldonado--Ram\'irez}
\address{Departamento de Matem\'aticas\\
Escuela Superior de F\'isica y Matem\'aticas del I.P.N.}
\email{rosalia@esfm.ipn.mx}
\author[M. Rzedowski]{Martha Rzedowski--Calder\'on}
\address{Departamento de Control Autom\'atico\\
Centro de Investigaci\'on y de Estudios Avanzados del I.P.N.}
\email{mrzedowski@ctrl.cinvestav.mx}
\author[G. Villa]
{Gabriel Villa--Salvador}
\address{
Departamento de Control Autom\'atico\\
Centro de Investigaci\'on y de Estudios Avanzados del I.P.N.}
\email{gvilla@ctrl.cinvestav.mx}
\subjclass[2010]{Primary 11R60; Secondary 11R29, 11R58}
\keywords{Genus fields, congruence function fields,
global fields, Dirichlet characters, cyclotomic function fields,
Kummer extensions, Artin--Schreier extensions, Witt vectors.}
\date{August 15, 2014}
\begin{document}

\begin{abstract}
In the published version of this paper [Finite Fields and Their
Applications {\bf 20} (2013) 40--54], there is an error in the proof of
Theorem 4.2 of the paper. Here
we correct the error and give the right statments for Theorems
4.2, 4.5 and 5.2

We give a construction of genus fields for congruence function
fields. First we consider the cyclotomic function field case
following the ideas of Leopoldt and
then the general case.
As applications we give explicitly the genus fields of Kummer, Artin--Schreier
and cyclic $p$--extensions. Kummer extensions were
obtained previously by G. Peng and Artin--Schreier extensions
were obtained by S. Hu and Y. Li.
\end{abstract}

\maketitle

\section{Introduction}\label{S1}

The concept of genus field goes back to Gauss \cite{Gau1801} in the
context of binary quadratic forms. For any finite extension $K/{\ma Q}$, 
the genus field is defined as the maximal unramified extension $K_{{\eu {ge}}}$
of $K$ such that $K_{{\eu {ge}}}$ is the composite of $K$ and an abelian extension
$k^{\ast}$ of ${\ma Q}$: $K_{{\eu {ge}}}=Kk^{\ast}$. This definition
is due to Fr\"ohlich \cite{Fro83}.
If $K_H$ denotes the Hilbert
class field of $K$, $K\subseteq K_{{\eu {ge}}}\subseteq K_H$. Originally the definition of
genus field was given for a quadratic extension of ${\ma Q}$. 
We have that for a quadratic number field $K$, the Galois group of
$K_{{\eu {ge}}}/K$ is isomorphic to the maximal subgroup of exponent $2$ of the
ideal class group of $K$. Gauss
in fact proved that if $t$ is the number of different positive finite
rational primes dividing the discriminant $\delta_K$ of a quadratic
number field $K$, then the $2$--rank of the class group of $K$
is $2^{t-2}$ if $\delta_K>0$ and there exists a prime $p\equiv
3\bmod 4$ dividing $\delta_K$ and $2^{t-1}$ otherwise.

H.W. Leopoldt \cite{Leo53} determined the genus field $K_{{\eu {ge}}}$ of an
abelian extension $K$ of ${\ma Q}$ using Dirichlet characters,
generalizing the work of H. Hasse \cite{Has51} who introduced
genus theory for quadratic number fields.

M. Ishida determined the genus field $K_{{\eu {ge}}}$ of any finite extension
of ${\ma Q}$ \cite{Ish76}. X. Zhang \cite{Xia85} gave a simple expression
of $K_{{\eu {ge}}}$ for any abelian extension $K$ of ${\ma Q}$ using Hilbert
ramification theory.

For function fields, the notion of Hilbert class field has no proper
analogue since the maximal abelian extension of any
congruence function field $K/{\ma F}_q$ contains $K_m:=
K{\ma F}_{q^m}$ for all positive integers $m$
 and therefore the maximal unramified abelian
extension of $K$ is of infinite degree over $K$.

M. Rosen \cite{Ros87} gave a definition of an analogue of the
Hilbert class field of $K$ and a fixed finite nonempty set $S_{\infty}$ of
prime divisors of $K$. Using this definition, a proper concept
of genus field can be given along the lines of the classical case.
R. Clement \cite{Cle92} considered a cyclic extension of $k:=
{\ma F}_q(T)$ of degree a prime number $l$ dividing $q-1$ and
found the genus field using class field theory. 
Later, S. Bae and J.K. Koo \cite{BaeKoo96}
generalized the results of Clement following the methods of
Fr\"ohlich \cite{Fro83}.

G. Peng \cite{Pen2003} explicitly described the genus theory for
Kummer function fields. Recently S. Hu and Y. Li \cite{HuLi2010}
explicitly described the ambiguous ideal classes and the genus field of
an Artin--Schreier extension of a rational congruence function
field.

In this paper we develop an analogue of
Leopoldt's genus theory for congruence function
fields. We give a description of the genus field $K_{{\eu {ge}}}$ of a finite
abelian extension of a rational congruence function field by 
means of the group of Dirichlet characters for cyclotomic
function fields. Here we consider the Hilbert class field $K_H$ of a
 function field $K$ using the construction of Rosen for $S_{\infty}=
\{{\eu p}_{\infty}\}$, where ${\eu p}_{\infty}$ is the pole
divisor of $T$ in the rational function field $k={\ma F}_q(T)$.

More precisely, let $K$ be a finite abelian extension of $k$.
Then if $K$ is contained in a cyclotomic extension, we find
that $K_{{\eu {ge}}}$ is also contained in a cyclotomic extension and we
find the group of characters associated to $K_{{\eu {ge}}}$. If $K$ is not
contained in a cyclotomic extension and $\p$ is tamely ramified,
 we consider a
suitable extension of constants of $K$ and then proceed
as before to find $K_{{\eu {ge}}}$. Finally, if $\p$ is wildly ramified we
consider the cyclotomic extension where $\p$ is totally and
wildly ramified and proceed similarly to the previous cases.

We apply our results to Kummer and to
Artin--Schreier extensions of $k$ and we give new proofs
of the results of Peng
and of Hu and Li.
At the end, we show that our construction also works to
find explicitly the genus field of an arbitrary
finite cyclic $p$--extension of $k$
given by a Witt vector.

\section{The classical case}\label{S2}

Let $K$ be a number field, that is, a finite extension of ${\ma Q}$.
Let $K_H$ be the Hilbert class field of $K$, that is, $K_H$ is the
maximal abelian unramified extension of $K$. Then the
genus field $K_{{\eu {ge}}}$ of $K$ is the maximal extension of $K$ 
contained in $K_H$ that is the composite of $K$
and an abelian extension $k^{\ast}$ of ${\ma Q}$. Equivalently,
$K_{{\eu {ge}}}=K k^{\ast} \subseteq K_H$ with $k^{\ast}$ the maximal
abelian extension of ${\ma Q}$ contained in $K_H$.

First we recall genus theory in the abelian case for number fields
\cite{Leo53}. In this case $K_{{\eu {ge}}}$ is the maximal extension
of $K$ contained in $K_H$ such that $K_{{\eu {ge}}}/{\ma Q}$ is 
abelian. So, in this section we consider $K/{\ma Q}$ an abelian
extension. By the Kronecker--Weber Theorem there exists
$n\in{\ma N}$ such that $K\subseteq \cic n$, where $\zeta_n$ denotes
a primitive $n$--th root of unity. Let $X$ be the group of 
Dirichlet characters associated to $K$. That is,
$X$ is a subgroup of the dual of $\Gal(\cic n/{\ma Q})\cong
U_n:=\big({\ma Z}/n{\ma Z}\big)^{\ast}$; then $X\subseteq \hat{U}_n$
and $K$ is the subfield of $\cic n$ fixed by $\cap_{\chi\in X}
\ker \chi$.

Let $n = p_1^{\alpha_1} \cdots p_r^{\alpha_r}$ be the factorization of $n$ as a product of prime powers. For any character $\chi$ let $\chi_{p_i} =  \chi \circ \varphi_i$ 
\begin{gather*}
\xymatrix{
U_n\ar[r]^{\chi}&{\ma C}^*\\
U_{p_i^{\alpha_i}}\ar[u]^{\varphi_i}\ar[ru]_{\chi_{p_i}}&}
\intertext{where  $\varphi_i = \Phi^{-1} \circ g_{p_i}$, with}
\begin{array}{rcl}
\Phi\colon U_n &\to& \prod\limits_{j=1}^{r}U_{p_j^{\alpha_j}}\\
a \bmod n &\mapsto& (a \bmod {p_j}^{\alpha_j})_j
\end{array}
\text{\ and\ }
\begin{array}{rcl}
g_{p_i}\colon U_{p_i^{\alpha_i}} &
\to& \prod\limits_{j=1}^{r}U_{p_j^{\alpha_j}}\\
a \bmod {p_i}^{\alpha_i} &\mapsto& (1, \ldots, a\bmod {p_i}^{\alpha_i}, \ldots, 1).
\end{array}
\end{gather*}

The character $\chi_{p_i}$ has conductor
$p_i^{\beta_i}$ for some $\beta_i\in{\ma N}$, $1\leq i\leq r$.
For any rational prime $p\notin\{p_1,\ldots,p_r\}$, $\chi_p=1$.
Let $p$ be a rational prime and define $X_p:=\{\chi_p\mid
\chi\in X\}$. Then we have $|X_p|=e_p$ is the ramification index
of $p$ in $K$. Thus,

\begin{theorem}[Leopoldt \cite{Leo53}]\label{T2.1} Let $K$
be an abelian extension of ${\ma Q}$ and let $X$ be the 
group of Dirichlet characters associated to $K$. 
Let $J$ be the maximal abelian extension of ${\ma Q}$ containing
$K$ such that $J/K$ is unramified at
every finite rational prime. Let $Y$ be the group of Dirichlet
characters associated to $J$. Then
$Y=\prod_{p\in{\mathcal P}}X_p$, where the product runs through
the set of rational primes ${\mathcal P}$. 
\end{theorem}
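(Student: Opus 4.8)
The plan is to compute the group of Dirichlet characters attached to $J$ prime by prime, using the dictionary between ramification and character groups that was recalled just before the statement. Recall that for each rational prime $p$ the local component $X_p = \{\chi_p \mid \chi \in X\}$ satisfies $|X_p| = e_p$, the ramification index of $p$ in $K$. Write $Z := \prod_{p\in\mathcal P} X_p$, a subgroup of $\hat U_N$ for a suitable modulus $N$, and let $L$ be the abelian extension of $\mathbb Q$ associated to $Z$. Since each $X_p \supseteq \{\chi_p\mid\chi\in X\}$ and $X = \prod$ of its own local components (because $U_n \cong \prod_j U_{p_j^{\alpha_j}}$ forces every $\chi$ to factor as $\prod_i \chi_{p_i}$), we have $X \subseteq Z$, hence $K \subseteq L$. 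The claim is that $L = J$.

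First I would show $L/K$ is unramified at every finite rational prime. Fix a prime $p$. The ramification index of $p$ in $L$ is $|Z_p|$, where $Z_p$ is the $p$-component of $Z$; but by construction $Z_p = X_p$, so $e_p(L) = |X_p| = e_p(K)$. Since $K\subseteq L$ are both abelian over $\mathbb Q$ and their ramification indices at $p$ agree, $p$ is unramified in $L/K$ (the ramification index is multiplicative in the tower, and a prime above $p$ in $K$ has the same absolute ramification index as its extensions in $L$). This holds for every finite prime, so $L \subseteq J$ by maximality of $J$.

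For the reverse inclusion $J \subseteq L$, let $Y$ be the character group of $J$; it suffices to prove $Y \subseteq Z$, i.e. $Y_p \subseteq X_p$ for every prime $p$. Since $K\subseteq J$, we have $X_p \subseteq Y_p$, and since $J/K$ is unramified at $p$, the ramification index of $p$ in $J$ equals that in $K$, giving $|Y_p| = e_p(J) = e_p(K) = |X_p|$. Combining $X_p\subseteq Y_p$ with $|X_p| = |Y_p|$ yields $Y_p = X_p$, hence $Y = \prod_p Y_p = \prod_p X_p = Z$, so $J = L$.

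The main obstacle is the bookkeeping needed to make the prime-by-prime argument rigorous: one must be careful that the conductor of $J$ may involve primes already ramified in $K$ but to possibly higher power (so $N$ may properly divide a power of $n$ times extra primes is not the issue — rather the subtlety is that $J/K$ unramified bounds only the ramification index, not a priori the conductor exponent), and one must invoke the fact that for abelian extensions the ramification index at $p$ is exactly $|X_p|$ — equivalently that the inertia group at $p$ corresponds under the character pairing to the local unit group $U_{p^\infty}$ image. Once that correspondence (from the discussion preceding Theorem~\ref{T2.1}) is in hand, the equality $e_p(J)=e_p(K)$ together with $X_p\subseteq Y_p$ closes the argument; the only remaining care is to check that no finite prime outside $\{p_1,\dots,p_r\}$ can ramify in $J$, which is immediate since such a prime is unramified in $K$ and $J/\mathbb Q$ would then be ramified at a prime unramified in $K$, contradicting that $J/K$ is unramified there while $K/\mathbb Q$ is unramified there.
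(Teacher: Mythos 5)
Your argument is correct and follows essentially the same route as the paper: it uses $|X_p|=e_p$ to show the field $L$ of $Z=\prod_p X_p$ is unramified over $K$ at all finite primes (hence $L\subseteq J$), and the equality $X_p=Y_p$ forced by $J/K$ being unramified to get the reverse inclusion. The only nitpick is that in the middle you assert $Y=\prod_p Y_p$ (and similarly $X=\prod_p X_p$), which is exactly what is being proved; but only the automatic inclusions $Y\subseteq\prod_p Y_p$ and $X\subseteq\prod_p X_p$ are needed there, and the equalities then follow once $J=L$ is established, so the proof stands.
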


\begin{proof}
Since $J/K$ is not ramified at any finite prime, we have $e_p(J|K) = 1$,
then the ramification indices coincide,
thus $|X_p|=|Y_p|$ for all primes $p.$ Since $X_p \subseteq Y_p$, we have
$X_p=Y_p.$ Let $Z :=\prod_{p\in{\mathcal P}} X_p$. Then $Z_p=X_p$. Let $F$ be
the field associated to $Z$. As $X \subseteq \prod_{p\in{\mathcal P}} X_p = Z$,
we have $K \subseteq F$ and analogously $J \subseteq F$.
On the other hand, since $|X_p|=|Z_p|$, the extension $F/K$ is unramified, thus
$F \subseteq J$. Therefore $F = J$ and it follows that $Y = Z = \prod_{p\in{\mathcal P}}
 X_p$.
\end{proof}

\begin{remark}\label{R2.2} {\rm{If the infinite primes are
unramified in $J/K$ we have $K_{{\eu {ge}}}=J$. Otherwise, $K$ is real
and $J$ is imaginary. Then $K_{{\eu {ge}}}=J^+$ where $J^+
:=J\cap {\ma R}$ and the group of Dirichlet characters
associated to $J^+$ is $Y^+:=\{\chi\in Y\mid \chi(-1)=1\}$.
Finally $[J:J^+]=[Y:Y^+]=2$.
}}
\end{remark}

\begin{example}[Gauss genus theorem]\label{E2.3}{\rm{ Let
$K={\ma Q}(\sqrt{d})$ be a quadratic extension of ${\ma Q}$,
where $d\in{\ma Z}$ is square free. Let $m$ be the number
of different prime factors of $\delta_K$, the discriminant
 of $K$. If $p_1,\ldots, p_m$ are these factors, we choose
$p_1=2$ if $2\mid \delta_K$.

Let $\chi$ be the quadratic character associated to $K$. Then
$\chi_{p_i}\neq 1$, $1\leq i\leq m$ and $\chi_q=1$ for all
$q\in {\mathcal P}\setminus \{p_1,\ldots,p_m\}$. For $p_i\neq 2$,
$\chi_{p_i}$ is unique and $\chi_{p_i}(-1)=(-1)^{(p_i-1)/2}$.
In this case the field associated to $\chi_{p_i}$ is ${\ma Q}\big(
\sqrt{(-1)^{(p_i-1)/2}p_i}\big)$. If $p_1=2$, then there are
three quadratic characters $\chi_{p_1}=\chi_2$; two of
them have conductor $8$, one is real and one imaginary, and
the other one has conductor $4$. If $\chi_2$ is real, $\chi(-1)=1$ and the
field associated is ${\ma Q}(\sqrt{2})$. If $\chi_2$ is imaginary
of conductor $8$, $\chi(-1)=-1$ and the field associated is
${\ma Q}(\sqrt{-2})$. Finally, if $\chi_2$ is of conductor $4$, 
$\chi(-1)=-1$ and the field associated to $\chi_2$ is ${\ma Q}
(\zeta_4)={\ma Q}(i)={\ma Q}(\sqrt{-1})$. It follows
that the maximal abelian extension of ${\ma Q}$ unramified
at every finite prime is $J={\ma Q}\big(\sqrt{\varepsilon}, \sqrt{(-1)^{
(p_i-1)/2}p_i}\mid 2\leq i\leq m\big)$ where $\varepsilon=
(-1)^{(p_1-1)/2}p_1$ if $p_1\neq 2$ and $\varepsilon =2, -2$ or $-1$ if 
$p_1=2$.

Thus we obtain $[J:{\ma Q}]=2^m$ and $[J:K]=2^{m-1}$. We have
$K_{{\eu {ge}}}=J$ except when $K$ is real and $J$ is imaginary and this
last case occurs when $\delta_K>0$ ($d>0$) and there exists
$p_i\equiv 3\bmod 4$. In this case, $[J^+:K]=2^{m-2}$.
For the quadratic extension $K={\ma Q}(\sqrt{-14})$ over ${\ma Q}$,
we have $K_{{\eu {ge}}}={\ma Q}(\sqrt{2},\sqrt{-7})$ and for $K=
{\ma Q}(\sqrt{79})$ we obtain $J={\ma Q}(\sqrt{-79},i)$ and
$K_{{\eu {ge}}}=J^+=J\cap {\ma R}={\ma Q}(\sqrt{79})=K$.

Now if ${\mathcal C}_K$ is the class group of $K$, ${\mathcal C}_K
\cong \Gal(K_H/K)$ and $E$ is the fixed field of ${\mathcal C}_K^2$, then
$\Gal(E/K)\cong {\mathcal C}_K/{\mathcal C}_K^2$. Since $K_{{\eu {ge}}}$ is the maximal
abelian extension of ${\ma Q}$ contained in $K_H$, $K_{{\eu {ge}}}$ is the
fixed subfield of $K_H$ under the derived group $G'$ of $G
:=\Gal(K_H/{\ma Q})$.
It can be verified that $G'={\mathcal C}_K^2$ so that $K_{{\eu {ge}}}=E$ and it
follows that the $2$--rank of ${\mathcal C}_K$ is ${m-1}$ unless $d>0$
and there exists a prime $p\equiv 3\bmod 4$ dividing $d$ and in
this case the $2$--rank of ${\mathcal C}_K$ is ${m-2}$.
}}
\end{example}

\begin{example}\label{E2.4}{\rm{ If $p$ is an odd prime, $K$ is a cyclic
extension of ${\ma Q}$ of degree $p$ and $m$ is the number
of ramified primes in $K$, it follows that $K_{{\eu {ge}}}$ is an elementary
abelian $p$--extension of ${\ma Q}$ of degree $p^m$ and $[K_{{\eu {ge}}}:K]=
p^{m-1}$. In particular $p^{m-1}\mid |{\mathcal C}_K|$.
}}
\end{example}

Now let $K$ be any abelian extension of ${\ma Q}$ with Dirichlet
character group $X$. Consider for each $p\in{\mathcal P}$, 
$X_p$. Let $J$ be the field associated to $\prod_{
p\in{\mathcal P}} X_p$. 
Let $p^{m_p}:=\gcd\{{\eu f}_{\chi_p}\mid \chi\in X\}$ where
${\eu f}_{\chi_p}$ denotes the conductor of $\chi_p$. Then
the field $K_p$ associated to $X_p$ is contained in $\cic {p^{m_p}}$
but not in $\cic {p^{m_p-1}}$. If $p$ is odd, $K_p$ is the unique
subfield of $\cic {p^{m_p}}$ of degree $|X_p|$ over ${\ma Q}$ and
$K_p/{\ma Q}$ is a cyclic extension. If $p=2$, $K_2$ is one of the
following fields. If $|X_2|=\varphi(2^{m_2})=2^{m_2-1}$, $K_2=
\cic {2^{m_2}}$. If $|X_2|=\frac{\varphi(2^{m_2})}{2}=2^{m_2-2}$,
$K_2=\cic {2^{m_2}}^+=
{\ma Q}\big(\zeta_{2^{m_2}}+\zeta_{2^{m_2}}^{-1}\big)=\cic {2^{m_2}}
\cap {\ma R}$ if $\chi(-1)=1$ for all $\chi\in X$ and $K_2=
{\ma Q}\big(\zeta_{2^{m_2}}-\zeta_{2^{m_2}}^{-1}\big)$ if there exists
$\chi\in X$ with $\chi(-1)=-1$. 

Therefore, if $K$ and $J$ are both real or both imaginary, 
$K_{{\eu {ge}}}=J=\prod_{p\in{\mathcal P}}K_p$. If $K$ is real and
$J$ is imaginary, $K_{{\eu {ge}}}=J^+=J\cap {\ma R}$.

\section{Cyclotomic function fields}\label{S3}

Most of the results on cyclotomic function fields we need
in this paper were developed by D.R. Hayes in \cite{Hay74}.
As a reference we use \cite{Hay74,Vil2006}. Let $k={\ma F}_q(T)$ be
a rational congruence function field, ${\ma F}_q$ denoting the finite
field of $q$ elements. Let $R_T={\ma F}_q[T]$ be the ring of 
polynomials, that is, we choose $R_T$ as
 the ring of integers of $k$. $R_T^+$ denotes the set of
monic irreducible polynomials in $R_T$. For $N\in R_T\setminus
\{0\}$, $\Lambda_N$ denotes the $N$--torsion of the Carlitz
module and $k(\Lambda_N)$ denotes the $N$--th cyclotomic
function field. The $R_T$--module $\Lambda_N$ is cyclic and $\lambda_N$,
or $\lambda$ if there is no possible confusion,
denotes a generator of $\Lambda_N$ as $R_T$--module.
If we let $X:=1/T$, $R_X=R_{1/T}={\ma F}_q[1/T]$, then
$k={\ma F}_q(1/T)$ and we define $\Lambda_{1/T^n}$ as
the $(1/T^n)$--torsion of the Carlitz module with $R_X$ instead of $R_T$.
For any function field $K/{\ma F}_q$, $K_m:= K{\ma F}_{q^m}$
denotes the constant field extension. For any $m\in{\ma N}$,
$C_m$ denotes a cyclic group of order $m$.

We have $k(\Lambda_N)=k(\lambda_N)$ and $G_N:=
\Gal(k(\Lambda_N)/k)\cong \G N$ with the identification
$\sigma_A\lambda_N=\lambda_N^A$ for $A\in R_T$.
For any finite extension $K/k$ we will use the symbol
$\S K$ to denote
either one prime or the set of all primes in $K$ above ${\eu p}_{\infty}$,
the pole divisor of $T$ in $k$.
When we mention the degree of $\S K$, where $K/k$ is a Galois
extension, we mean the degree of each element of $\S K$.
 We understand by a {\em
Dirichlet character} any group homomorphism $\chi\colon
\G N\to {\ma C}^{\ast}$ and we define the conductor ${\eu f}_{\chi}$
of $\chi$ as the monic polynomial of minimum degree such that
$\chi$ can be defined modulo ${\eu f}_{\chi}$, $\chi\colon
\G {{\eu f}_{\chi}}\to {\ma C}^{\ast}$.

Given any group of characters $X\subseteq \widehat{G_N}(=\Hom(G_N, 
{\ma C}^{\ast}))$, the field associated to $X$ is the subfield of $k(\Lambda_N)$
fixed under $\cap_{\chi\in X}\ker \chi$. Conversely, for any field $K
\subseteq k(\Lambda_N)$, the group of Dirichlet characters associated to 
$K$ is $\widehat{\Gal(K/k)}$.

For any character $\chi$ we consider the canonical decomposition
$\chi=\prod_{P\in R_T^+}\chi_P$, where $\chi_P$ has conductor a power of
$P$. We have ${\eu f}_{\chi}=\prod_{P\in R_T^+} {\eu f}_{\chi_P}$.

If $X$ is a group of Dirichlet characters, we write $X_P:=\{
\chi_P\mid \chi\in X\}$ for $P\in R_T^+$. If $K$ is any extension
of $k$, $k\subseteq K\subseteq k(\Lambda_N)$ and $P\in R_T^+$,
then the ramification index of $P$ in $K$ is $e_P=|X_P|$.

In $k(\Lambda_N)/k$, ${\eu p}_{\infty}$ has ramification index $q-1$
and decomposes into $\frac{|G_N|}{q-1}$ different
prime divisors of $k(\Lambda_N)$ of degree $1$. Furthermore,
with the identification $G_N\cong \G N$, the inertia
group ${\eu I}$ of ${\eu p}_{\infty}$ is ${\ma F}_q^{\ast}\subseteq \G N$,
that is, ${\eu I}=\{\sigma_a\mid a\in {\ma F}_q^{\ast}\}$. In this
case the inertia and the decomposition groups coincide. The
primes that ramify in $k(\Lambda_N)/k$ are ${\eu p}_{\infty}$
and the polynomials $P\in R_T^+$ such that $P\mid N$.

We set $L_n$ to be the largest
subfield of $k(\Lambda_{1/T^n})$ where ${\eu p}_{
\infty}$ is fully and purely wildly ramified, $n\in{\ma N}$. For any field $F$,
$_nF$ denotes the composite $FL_n$.

We recall Rosen's definition for a relative Hilbert class field of a
congruence function field $K$.

\begin{definition}[\cite{Ros87}]\label{D3.1}{\rm{
Let $K$ be a function field with field of constants ${\ma F}_q$.
Let $S$ be any nonempty finite set of prime divisors of $K$. The
{\em Hilbert class function field of $K$ relative to $S$}, $K_{H,S}$, is the
maximal unramified abelian extension of $K$ where every element
of $S$ decomposes fully.
}}
\end{definition}

From now on, for any finite extension $K$ of $k$ we will consider
$S$ as the set of prime divisors dividing ${\eu p}_{\infty}$, the pole
divisor of $T$ in $k$ and we write $K_H$ instead of $K_{H,S}$.

\begin{definition}\label{D3.2}{\rm{
Let $K$ be a finite geometric extension of $k$, that is,
the exact field of constants of $K$ is ${\ma F}_q$.
The {\em genus
field} $K_{{\eu {ge}}}$ of $K$ is the maximal extension of $K$
contained in $K_H$ that is the composite
of $K$ and an abelian extension of $k$. Equivalently,
$K_{{\eu {ge}}}=K k^{\ast}$ where $k^{\ast}$ is the maximal abelian extension
of $k$ contained in $K_H$.
}}
\end{definition}

When $K/k$ is an abelian extension, $K_{{\eu {ge}}}$ is the maximal
abelian extension of $k$ contained in $K_H$. Our main goal
in this section
is to find $K_{{\eu {ge}}}$ when $K$ is a subfield of a cyclotomic
function field.
In what follows
$K$ will always denote a finite geometric
abelian extension of 
$k$. First we note that we have the analogue to Leopoldt's
result.

\begin{proposition}\label{P3.3}
If $K\subseteq k(\Lambda_N)$ and the group of
characters associated to $K$ is $X$, then the maximal
abelian extension $J$ of $K$ unramified at every finite prime
$P\in R_T^+$, contained in a cyclotomic extension, is the
field associated to $Y=\prod_{P\in R_T^+} X_P=\prod_{
P\mid N} X_P$.
\end{proposition}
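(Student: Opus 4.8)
The plan is to transcribe the proof of Theorem~\ref{T2.1} into the function field setting, using throughout the correspondence between subfields of a cyclotomic function field and groups of Dirichlet characters together with the identity $e_P=|X_P|$ for the ramification index of $P\in R_T^+$. First I would check that $J$ is a well defined single field: if $J_1,J_2$ are abelian over $k$, each contained in a cyclotomic function field and each unramified over $K$ at every finite prime, then so is $J_1J_2$; indeed it is abelian over $k$ since $\Gal(J_1J_2/k)$ embeds in $\Gal(J_1/k)\times\Gal(J_2/k)$, it is contained in $k(\Lambda_{N_1})k(\Lambda_{N_2})=k(\Lambda_{\operatorname{lcm}(N_1,N_2)})$, and the ramification index over $K$ of any finite prime divides the product of the corresponding indices in $J_1/K$ and $J_2/K$, hence equals $1$. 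Fix $k(\Lambda_M)\supseteq J$ and let $Y\subseteq\widehat{G_M}$ be the character group of $J$.

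Next I would exploit that $J/K$ is unramified at every finite prime. For $P\in R_T^+$ this means $e_P(J/K)=1$, and multiplicativity of ramification indices in the tower $k\subseteq K\subseteq J$ (all three extensions Galois, so every prime over $P$ has a common index at each layer) gives $|Y_P|=e_P(J/k)=e_P(J/K)\,e_P(K/k)=e_P(K/k)=|X_P|$. Since $K\subseteq J$ we have $X\subseteq Y$, hence $X_P\subseteq Y_P$, and comparing orders yields $X_P=Y_P$ for every $P\in R_T^+$.

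Now I would set $Z:=\prod_{P\in R_T^+}X_P$; as only finitely many $X_P$ are nontrivial, $Z$ sits inside $\widehat{G_{N'}}$ for some $N'$, and I let $F$ be its associated field. From the canonical decomposition — in particular $(\chi_Q)_P=1$ for $Q\neq P$ — one gets $Z_P=X_P$. From $\chi=\prod_P\chi_P$ we obtain $X\subseteq Z$, so $K\subseteq F$; from $\psi=\prod_P\psi_P\in\prod_P Y_P=\prod_P X_P=Z$ we obtain $Y\subseteq Z$, so $J\subseteq F$. On the other hand $|Z_P|=|X_P|=e_P(K/k)$ for every $P$, so the same tower argument gives $e_P(F/K)=1$ at every finite prime; as $F$ lies in a cyclotomic extension, the maximality of $J$ forces $F\subseteq J$. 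Hence $F=J$ and $Y=Z=\prod_{P\in R_T^+}X_P$. Finally, since $K\subseteq k(\Lambda_N)$ the conductor of each $\chi\in X$ divides $N$, so $\chi_P=1$ for $P\nmid N$, giving $\prod_{P\in R_T^+}X_P=\prod_{P\mid N}X_P$, as claimed.

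In this argument the order counting and the identity $Z_P=X_P$ (just orthogonality of the canonical decomposition in the variable $P$) are routine. The one step that genuinely needs care is keeping $K$, $J$ and $F$ simultaneously inside a common cyclotomic function field, since the equality $e_P=|X_P|$ and the whole character dictionary are only available there; this is precisely why the definition of $J$ builds in ``contained in a cyclotomic extension,'' and it is what makes the argument go through rather than the naive attempt with the full Hilbert class field $K_H$, which need not be cyclotomic. As in the classical case the infinite prime $\p$ imposes no constraint here — the proposition concerns finite primes only — and the further step of cutting $J$ down to its $\p$-unramified part to obtain $K_g$ is handled separately, in analogy with Remark~\ref{R2.2}.
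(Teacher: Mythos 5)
Your argument is correct and is essentially the paper's own: the paper proves this proposition simply by declaring it ``analogous to the proof of Theorem~\ref{T2.1},'' and your transcription (local equality $X_P=Y_P$ from $e_P=|X_P|$, then comparing $J$ with the field $F$ associated to $Z=\prod_P X_P$) is exactly that analogue, with the well-definedness of $J$ and the reduction to $P\mid N$ spelled out explicitly.
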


\begin{proof}
Analogous to the proof of Theorem \ref{T2.1}.
\end{proof}

In this case ${\eu p}_{\infty}$ has no inertia in $J/K$ but it might
be ramified.

The following proposition should be well known. However, since we
could not find any reference, we include it here.

\begin{proposition}\label{P3.4} If $E/k$ is an abelian extension such
that ${\eu p}_{\infty}$ is tamely ramified, then there exist $N\in R_T$
and $m\in{\ma N}$ such that $E\subseteq k(\Lambda_N){\ma F}_{q^m}$.
\end{proposition}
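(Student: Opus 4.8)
The plan is to describe the abelian extension $E/k$ by its ramification data at $\p$ and at the finite primes, and then build a cyclotomic-plus-constants extension that realizes the same (or larger) ramification. Since $E/k$ is abelian, the conductor-discriminant machinery applies: let $P_1,\ldots,P_r\in R_T^+$ be the finite primes ramified in $E$, and let $n\in\ma N$ be chosen so that the constant field extension $_{}E{\ma F}_{q^n}$ ``absorbs'' the inertia of $\p$ in the sense that $\p$ becomes tamely ramified with ramification index dividing $q^n-1$. The hypothesis that $\p$ is tamely ramified in $E/k$ is exactly what makes this possible, because tame ramification at $\p$ forces $e_{\p}(E/k)\mid q-1$ after no enlargement is even needed for the wild part --- there is none --- and the residue-field obstruction is removed by a suitable constant extension. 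So the first step is to reduce to showing $E{\ma F}_{q^m}\subseteq k(\Lambda_N){\ma F}_{q^m}$ for suitable $N,m$, i.e.\ that after a constant extension $E$ embeds in a cyclotomic function field over the larger constant field.

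The second step is to analyze, prime by prime, the local behavior of $E/k$. For each ramified finite prime $P_i$, the tame part of the local extension $E_{\mathfrak P_i}/k_{P_i}$ is cyclic of order prime to $q$ and hence lies in a cyclotomic extension $k(\Lambda_{P_i^{a_i}})$ for a suitable exponent $a_i$; the wild part at $P_i$, if any, is a $p$-extension and is handled by taking $a_i$ large enough (the $P_i^{a_i}$-torsion of the Carlitz module produces all abelian $p$-power ramification at $P_i$, since $\Gal(k(\Lambda_{P^{a}})/k)\cong(R_T/P^a)^\ast$ has $p$-part growing with $a$). At $\p$, tameness means the local extension is cyclic of order dividing $q^n-1$ for some $n$; over the constant field ${\ma F}_{q^n}$ this ramification is realized inside $k(\Lambda_{1/T}){\ma F}_{q^n}$ (recall $\p$ has ramification index $q-1$ in $k(\Lambda_N)/k$, and after extending constants to ${\ma F}_{q^n}$ one gets ramification index $q^n-1$). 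Setting $N:=\prod_i P_i^{a_i}$ and choosing $m$ divisible by the relevant $n$, the extension $F:=k(\Lambda_N){\ma F}_{q^m}$ has, at every prime of $k$, ramification index divisible by that of $E{\ma F}_{q^m}/k$.

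The third step is the class-field-theoretic descent: both $E{\ma F}_{q^m}$ and $F$ are abelian over $k$, and $F$ contains a subextension with exactly the same local ramification data as $E{\ma F}_{q^m}$ at every prime (including $\p$), so $E{\ma F}_{q^m}\cdot F / F$ is unramified everywhere; but an everywhere-unramified \emph{geometric} abelian extension of a function field is trivial, while the constant part is already inside $F$. Hence $E{\ma F}_{q^m}\subseteq F=k(\Lambda_N){\ma F}_{q^m}$, and a fortiori $E\subseteq k(\Lambda_N){\ma F}_{q^m}$, which is the claim. (One may alternatively phrase this via conductors: the conductor of any character of $\Gal(E{\ma F}_{q^m}/k)$ divides $N$ away from $\p$ and is trivial at $\p$ relative to $F$, so the character factors through $\Gal(k(\Lambda_N){\ma F}_{q^m}/k)$.)

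The main obstacle I expect is the bookkeeping at $\p$: one must verify carefully that ``$\p$ tamely ramified in $E/k$'' plus a constant extension really does land the $\p$-local piece inside $k(\Lambda_{1/T^{\,\ast}}){\ma F}_{q^m}$ rather than requiring a genuinely ramified-at-$\p$ non-cyclotomic extension. This is where the structure of the inertia group at $\p$ in cyclotomic function fields (it is $\f\subseteq\G N$, cyclic of order $q-1$, enlarging to order $q^m-1$ after the constant extension) is used essentially --- tameness guarantees the local Galois group at $\p$ is cyclic of order prime to $p$, hence a quotient of $\widehat{{\ma Z}/(q^m-1)}$ for large $m$, and that is exactly the inertia available from cyclotomic-times-constant fields. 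The finite primes are comparatively routine once the exponents $a_i$ are taken large enough to accommodate both tame and wild ramification, using that the Carlitz cyclotomic tower at $P$ realizes every abelian extension of $k$ ramified only at $P$ and $\p$.
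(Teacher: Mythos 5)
Your proposal has a genuine gap at its central step, and it is not the route the paper takes. The paper's proof is short: it quotes the Kronecker--Weber theorem for function fields to get $E\subseteq k(\Lambda_N){\ma F}_{q^m}L_n$, and then notes that $k(\Lambda_N){\ma F}_{q^m}$ is exactly the fixed field of the first ramification group of $\p$ in $k(\Lambda_N){\ma F}_{q^m}L_n/k$, i.e.\ the maximal subextension in which $\p$ is tamely ramified (since $\p$ is tame in $k(\Lambda_N){\ma F}_{q^m}/k$ and fully wildly ramified in the $L_n$--direction); tameness of $\p$ in $E/k$ then forces $E\subseteq k(\Lambda_N){\ma F}_{q^m}$. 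You instead try to rebuild this from local data, and the key inference fails: from ``the ramification index of $E{\ma F}_{q^m}/k$ divides that of $F=k(\Lambda_N){\ma F}_{q^m}/k$ at every prime'' you cannot conclude that $E{\ma F}_{q^m}F/F$ is unramified. At a wildly ramified finite prime $P$, ramification indices carry no information about conductors: two cyclic degree--$p$ extensions of $k$, both totally ramified at $P$ but with different ramification jumps, have the same $e_P=p$, yet their compositum is ramified over each of them. To make your step 3 work you must bound the conductor of every character of $\Gal(E{\ma F}_{q^m}/k)$ by $N$ (away from $\p$), which is precisely the Kronecker--Weber/conductor statement you were supposed to prove or cite; your parenthetical ``alternative phrasing via conductors'' simply asserts the conclusion, and your appeal to ``the Carlitz tower at $P$ realizes every abelian extension ramified only at $P$ and $\p$'' is false without the proviso ``tame at $\p$'' (e.g.\ $y^p-y=T$ is ramified only at $\p$ and lies in no $k(\Lambda_N){\ma F}_{q^m}$), and with that proviso it is essentially the proposition itself, so the argument is circular. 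Moreover, the auxiliary claim that ``an everywhere-unramified geometric abelian extension of a function field is trivial'' is false for the base field $F$ you apply it to: cyclotomic function fields generally have nontrivial (relative) Hilbert class fields, and indeed the unramified extensions $E_g/E$ constructed throughout this very paper are of that kind. That triviality statement is valid over $k={\ma F}_q(T)$ itself, so even a repaired argument would have to work with the inertia subgroups of $\Gal(E{\ma F}_{q^m}F/k)$ rather than over $F$.

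There is also a factual slip about $\p$: constant field extensions are unramified everywhere, so $e_{\p}\big(k(\Lambda_N){\ma F}_{q^m}/k\big)=q-1$, not $q^m-1$; extending constants enlarges the available inertia degree (residue field), never the ramification. The correct observation (which you state, then abandon) is that for an abelian $E/k$ tame at $\p$ one automatically has $e_{\p}(E/k)\mid q-1$ because the residue field at $\p$ is ${\ma F}_q$, so no constant extension is needed to ``absorb'' tame inertia at $\p$; the constant extension and the cyclotomic part $k(\Lambda_N)$ together account for the unramified and the finite-prime behavior, while the wild behavior at $\p$ lives entirely in $L_n$. If you want a self-contained proof in the spirit of your outline, the honest version is: cite the function-field Kronecker--Weber theorem ($E\subseteq k(\Lambda_N){\ma F}_{q^m}L_n$), then run the paper's ramification-group argument at $\p$ to discard $L_n$; attempting to bypass Kronecker--Weber by matching ramification indices cannot succeed.
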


\begin{proof}
By the Kronecker--Weber Theorem \cite[Theorem 12.8.5]{Vil2006}, we have
$E\subseteq k(\Lambda_N){\ma F}_{q^m}L_n={_n\lam N}_m$ for some $N\in R_T$ and
$n,m\in{\ma N}$.
\[
\xymatrix{F\ar@{-}[rrr]\ar@{-}[dd]&&&FL_n\ar@{-}[dl]_V\ar@{-}[dd]\\
&E\ar@{-}[dl]&R\\k\ar@{-}[rrr]&&&L_n}
\]
Let $F:= k(\Lambda_N) {\ma F}_{q^m}={\lam N}_m$ and let $V$ be
 the first ramification group of
${\eu p}_{\infty}$ in $FL_n/k$. Then $R:=(FL_n)^V$ is the maximal extension
of $k$ contained in $FL_n$ where ${\eu p}_{\infty}$ is tamely ramified and in 
consequence $\S R$
is wildly ramified in $FL_n/R$.

Since ${\eu p}_{\infty}$ is tamely ramified in $E/k$, it follows that $E
\subseteq R$. Now, ${\eu p}_{\infty}$ is tamely ramified in $F/k$ and
$\S F$ is fully and wildly ramified in $FL_n/F$ and $FL_n/F$ is of degree $|V|$.
Hence $R=F$ and $E\subseteq F$.
\end{proof}

\begin{proposition}\label{P3.5} With the hypothesis of 
Proposition {\rm{\ref{P3.3}}}, if $e_{{\eu p}_{\infty}}(K|k)=q-1$, then
$K_{{\eu {ge}}}=J$.
\end{proposition}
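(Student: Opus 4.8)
The plan is to prove the two inclusions $J\subseteq K_g$ and $K_g\subseteq J$ separately. Since $K/k$ is abelian, $K_g$ is the maximal abelian extension of $k$ contained in $K_H$, so for the first inclusion it suffices to check that $J/k$ is abelian, that $J/K$ is unramified at every prime, and that $\p$ splits completely in $J/K$. The first is clear because $J\subseteq\lam N$, and $J/K$ is unramified at every finite prime by Proposition~\ref{P3.3}. Since $\p$ has ramification index $q-1$ in $\lam N/k$, we get $e_{\p}(J|k)\mid q-1$; as $K\subseteq J$ and $e_{\p}(K|k)=q-1$, this forces $e_{\p}(J|k)=q-1$ and hence $e_{\p}(J|K)=1$. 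Every prime of $\lam N$ above $\p$ has degree one, so the same holds in $J$ and in $K$, whence $\p$ has residue degree one in $J/K$; together with $e_{\p}(J|K)=1$ this says that $\p$ splits completely in $J/K$ (consistently with the observation after Proposition~\ref{P3.3} that $\p$ has no inertia in $J/K$). Therefore $J\subseteq K_H$, and since $J/k$ is abelian, $J\subseteq K_g$.

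For the reverse inclusion I first note that $K_g/k$ is abelian and that $\p$ is tamely ramified in $K_g/k$ with ramification index $q-1$, because $K_g/K$ is unramified and $q-1$ is prime to the characteristic of $k$. By Proposition~\ref{P3.4} we obtain $K_g\subseteq{\lam M}\,{\ma F}_{q^m}$ for suitable $M$ and $m$, and after replacing $M$ by $\operatorname{lcm}(M,N)$ we may assume $N\mid M$, so that $K\subseteq\lam M$. The crucial step, treated next, is to show that in fact $K_g\subseteq\lam M$, i.e.\ that no nontrivial constant field extension occurs inside $K_g$.

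Since $\lam M$ is geometric, $\Gal({\lam M}\,{\ma F}_{q^m}/k)\cong\Gal(\lam M/k)\times C_m$, and under this identification the inertia group of $\p$ in ${\lam M}\,{\ma F}_{q^m}/k$ is $\f\times\{1\}$ while its decomposition group is $\f\times C_m$. The hypothesis $e_{\p}(K|k)=q-1$ means precisely that $\f$ meets $\Gal(\lam M/K)$ trivially, so $\big(\f\times C_m\big)\cap\Gal({\lam M}\,{\ma F}_{q^m}/K)=\{1\}\times C_m$, a group of order $m$. On the other hand $\p$ splits completely in $K_g/K$ since $K_g\subseteq K_H$, so the decomposition groups of $\p$ in $K_g/k$ and in $K/k$ both have order $q-1$; comparing with $|\f\times C_m|=(q-1)m$ we see that $\big(\f\times C_m\big)\cap\Gal({\lam M}\,{\ma F}_{q^m}/K_g)$ also has order $m$, and being contained in $\{1\}\times C_m$ it must equal it. Hence $\{1\}\times C_m\subseteq\Gal({\lam M}\,{\ma F}_{q^m}/K_g)$, which is exactly the assertion $K_g\subseteq\lam M$.

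Once this is known, $K_g$ is an abelian extension of $k$ contained in the cyclotomic function field $\lam M$ and unramified over $K$ at every finite prime, so the maximality in Proposition~\ref{P3.3} gives $K_g\subseteq J$; together with the first inclusion, $K_g=J$. The main obstacle is the third paragraph, the elimination of the constant extension: this is exactly where the hypothesis $e_{\p}(K|k)=q-1$ is indispensable, for it is what makes the inertia of $\p$ disjoint from $\Gal(\lam M/K)$, so that the complete splitting of $\p$ in $K_H/K$ can force the $C_m$-factor to collapse. Without it $K_g$ could pick up extra ramification at $\p$ or a nontrivial extension of constants.
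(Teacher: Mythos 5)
Your proof is correct and follows essentially the paper's own strategy: first $J\subseteq K_g$ via $e_{\p}(J|K)=1$ together with the degree-one infinite primes, then Proposition \ref{P3.4} (tameness at $\p$) to place $K_g$ inside $\lam M{\ma F}_{q^m}$, then a decomposition-group argument at infinity to force $K_g\subseteq \lam M$, and finally the maximality of $J$ from Proposition \ref{P3.3}. The only (harmless) difference is in the bookkeeping of the constant-field elimination: you count orders inside $\Gal(\lam M{\ma F}_{q^m}/k)\cong\Gal(\lam M/k)\times C_m$, using that $e_{\p}(K|k)=q-1$ means $\f\cap\Gal(\lam M/K)=1$, whereas the paper identifies the decomposition group of $\S J$ in $\lam N{\ma F}_{q^m}/J$ with $\Gal(\lam N{\ma F}_{q^m}/\lam N)$ and places $K_g$ in its decomposition field --- the same idea executed over $J$ instead of over $k$.
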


\begin{proof}
Since $e_{{\eu p}_{\infty}}(J|K)=\frac{e_{{\eu p}_{\infty}}
(J|k)}{e_{{\eu p}_{\infty}}(K|k)}=\frac{q-1}{q-1}=1$,
${\eu p}_{\infty}$ decomposes fully in $J/K$ and therefore
$J\subseteq K_{{\eu {ge}}}$.

Now the field of constants of $K_{{\eu {ge}}}$ is ${\ma F}_q$ (see 
\cite{Ros87} or simply if ${\ma F}_{q^m}$ is the field of constants
of $K_{{\eu {ge}}}$, $k\subseteq k_m\subseteq K_{{\eu {ge}}}$ and $\p$
is fully inert in $k_m$; since $\p$ and 
the primes in $\S K$ have no inertia in
either $K/k$ or $J/K$, $m=1$.)

Since ${\eu p}_{\infty}$ decomposes fully in $K_{{\eu {ge}}}/K$ and 
${\eu p}_{\infty}$ is tamely ramified in $K/k$, by Proposition
\ref{P3.4} we have $K_{{\eu {ge}}}\subseteq k(\Lambda_N) {\ma F}_{q^m}$
for some $N\in R_T$ and $m\in{\ma N}$.

In all the extensions $k_m/k$, $K_m/K$, $J_m/J$,
$k(\Lambda_N)_m/k(\Lambda_N)$ the infinite primes are
fully inert since all have degree $1$ (see
\cite[Theorem 6.2.1]{Vil2006}).
In the extensions $K_m/k_m$ and $K/k$ the ramification index
of the infinite primes is $q-1$, that is, the maximal possible.
It follows that in $J_m/K_m$, $J/K$, $k(\Lambda_N)/J$ and
$k(\Lambda_N)_m/J_m$, $\S {K_m}$, $\S K$, $\S J$ and
$\S {J_m}$ are fully decomposed. Finally, in $k(\Lambda_N)_m
/J$ (and therefore in $k(\Lambda_N)_m/K_{{\eu {ge}}}$), $\S J$ is
unramified.

Let ${\mathcal G}:=\Gal(k(\Lambda_N)_m/J)$. For
$\S J$ we have that in this extension
 the ramification index $e$,
the inertia degree $f$ and the decomposition number $h$ are
$e=1$, $f=m$ and $h=\frac{|{\mathcal G}|}{m}$. Therefore
the decomposition group ${\eu D}$ of ${\eu p}_{\infty}$ is
of order $m$ and it is cyclic. We must have ${\eu D}=\Gal
(k(\Lambda_N)_m/k(\Lambda_N))$ because $\S {k(\Lambda_N)}$
is fully inert of degree $m$ in $k(\Lambda_N)_m/
k(\Lambda_N)$. Since the primes in $\S J$ have inertia
degree $1$ in $K_{{\eu {ge}}}/J$, it follows that $K_{{\eu {ge}}}\subseteq
k(\Lambda_N)$. Thus $K_{{\eu {ge}}}=J$.
\end{proof}

\[
\xymatrix{k(\Lambda_N)\ar@{-}[rr]\ar@{-}[dd]&&
k(\Lambda_N)_m\ar@{-}[dd]\\ &K_{{\eu {ge}}}\ar@{-}[dl]\\
J\ar@{-}[rr]\ar@{-}[d]&&J_m\ar@{-}[d]\\
K\ar@{-}[rr]\ar@{-}[d]&&K_m\ar@{-}[d]\\
k\ar@{-}[rr]&&k_m}
\qquad\qquad
\xymatrix{
&J\ar@{-}[d]\\&J^{(2)}\ar@{-}[d]\ar@{-}[dl]\\
J^{(1)}\ar@{-}[rd]_{Y_1}&K\ar@{-}[d]^X\\&k}
\]

Now we consider the case $k\subseteq K\subseteq
k(\Lambda_N)$ where $e_{\p}(K|k)$ not necessarily is equal to
$q-1$. We use the notations of Proposition \ref{P3.3}.
In this case $\S K$ might be ramified in $J/K$. Let
$Y_1:=\big\{\chi\in Y\mid \chi(a)=1 \text{\ for all\ } a\in {\ma F}_q^{\ast}
\subseteq \G N\cong G_N\big\}$ and let $J^{(1)}$ be the field
associated to $Y_1$. Then $J^{(1)}\subseteq J$ since $Y_1
\subseteq Y$, though not necessarily $J^{(1)}\subseteq K$ or
$K\subseteq J^{(1)}$. Let $J^{(2)}:=KJ^{(1)}$.

Then $J^{(2)}$ is the
field associated to the character group $X Y_1$. Since
${\eu p}_{\infty}$ decomposes fully in $J^{(1)}/k$, $\S K$
decomposes fully in $J^{(2)}$. Furthermore $\S {J^{(1)}}$
is fully ramified in $J/J^{(1)}$.
Hence $\S {J^{(2)}}$ is fully ramified in $J/J^{(2)}$.

We obtain that $J^{(2)}/K$ is an unramified abelian extension
with $J^{(2)}\subseteq k(\Lambda_N)$ and $\S K$ decomposes
fully in $J^{(2)}/K$. It follows that $J^{(2)}=J^{\eu D}$
where ${\eu D}$ is
the decomposition group of any prime in
$\S J$ with respect to the
Galois group $\Gal(J/K)$.

Now consider any unramified abelian extension $F/K$
such that $\S K$ decomposes fully in $F$.
By Proposition \ref{P3.4}, $F\subseteq k(\Lambda_N)
{\ma F}_{q^m}$ for some $N\in R_T$ and $m\in{\ma N}$. In
case $F\subseteq k(\Lambda_N)$, let $Z$ be the group
of Dirichlet characters associated to $F$. Since $F/K$
is unramified, it follows that $X\subseteq Z\subseteq Y$
by Proposition \ref{P3.3} and thus $F\subseteq J$. Since
$J^{(2)}=J^{\eu D}$, we obtain that $F\subseteq J^{(2)}$.
\[
\xymatrix{k(\Lambda_N)\ar@{-}[dd]_{\eu I}\ar@{-}[rr]&&k(\Lambda_N)_m
\ar@{-}[dd]\\ &F\ar@{-}[ddl] |!{[dl];[dr]}\hole\\  B\ar@{-}[d]\ar@{-}[rr]&&B_m\ar@{-}[d]\\
K\ar@{-}[rr]\ar@{-}[d]&&K_m\ar@{-}[d]\\
k\ar@{-}[rr]&&k_m}
\]

For the case $k\subseteq F\subseteq k(\Lambda_N)
{\ma F}_{q^m}$, that is, $F$ not necessarily is contained in
a cyclotomic function field, let ${\eu I}$ be the inertia group
of $\S K$ in $k(\Lambda_N)/K$ and let $B:=k(\Lambda_N)^{\eu I}$. Then
 the primes in $\S B$ are fully inert in $B_m$ because 
 they have degree $1$ and they are
 fully ramified in $\lam N/B$. Since $\S K$ decomposes fully
in $B$, $B$ is the decomposition field of the primes in $\S K$ in 
$k(\Lambda_N)_m/K$ so $F\subseteq B\subseteq k(\Lambda_N)$.

From the first part, we obtain that $F\subseteq J^{(2)}$. We have proved
the following

\begin{theorem}\label{T3.6} Assume $K\subseteq \lam N$ for some
polynomial $N$. Let $X$ be the group of Dirichlet characters associated
to $K$, $Y=\prod_{P\mid N}X_P$, $Y_1=\{\chi\in Y\mid \chi(a)=1
\text{\ for all\ } a\in{\ma F}_q^{\ast}\}$ and $J^{(1)}$ the field associated
to $Y_1$. Then the genus field $K_{{\eu {ge}}}$ of $K$ satisfies
$K_{{\eu {ge}}}\subseteq k(\Lambda_N)$ and $K_{{\eu {ge}}}=KJ^{(1)}$.
\hfill \qed
\end{theorem}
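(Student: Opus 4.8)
The plan is to show two containments: $K_g \subseteq k(\Lambda_N)$ together with $K_g \subseteq KJ^{(1)}$, and conversely $KJ^{(1)} \subseteq K_g$. The first direction will be assembled from the material already developed before the statement. Indeed, $K_g/K$ is an unramified abelian extension in which $\S K$ decomposes fully (this is the defining property of the Hilbert class field $K_H$ relative to $\S K$, and $K_g \subseteq K_H$), so by Proposition~\ref{P3.4} we have $K_g \subseteq k(\Lambda_M){\ma F}_{q^m}$ for suitable $M$ and $m$. The discussion immediately preceding the theorem (the ``general case $k\subseteq F\subseteq k(\Lambda_N){\ma F}_{q^m}$'' paragraph, applied with $F=K_g$) shows that any such $F$ in fact lies in $k(\Lambda_N)$ and then, via the character-group argument using Proposition~\ref{P3.3}, that $F\subseteq J$ and hence $F\subseteq J^{(2)}=J^{\eu D}=KJ^{(1)}$. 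So the containment $K_g\subseteq KJ^{(1)}$ comes essentially for free from the preparatory lemmas; the only thing to check carefully is that the field called $N$ in Proposition~\ref{P3.4} can be taken to be (a multiple of, and hence absorbed into) the fixed $N$ of the hypothesis, which follows because enlarging $N$ does not change $J^{(2)}$ — the characters $X_P$ for $P\nmid N$ are trivial.

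For the reverse containment $KJ^{(1)}\subseteq K_g$, I would argue that $J^{(2)}=KJ^{(1)}$ is itself an admissible field in the definition of $K_g$, i.e.\ that $J^{(2)}/K$ is unramified, abelian, and is the composite of $K$ with an abelian extension of $k$. It is the composite of $K$ with $J^{(1)}$, and $J^{(1)}/k$ is abelian by construction (it is the field fixed by $\cap_{\chi\in Y_1}\ker\chi$ inside a cyclotomic, hence abelian, extension). That $J^{(2)}/K$ is abelian is clear since $J^{(2)}\subseteq k(\Lambda_N)$ and $k(\Lambda_N)/k$ is abelian. The unramifiedness at finite primes $P\in R_T^+$ follows from Proposition~\ref{P3.3}: the character group of $J^{(2)}$ is $XY_1\subseteq XY=Y$, and for each $P\mid N$ we have $(XY_1)_P \supseteq X_P$ while $(XY_1)_P\subseteq Y_P=X_P$ (using $|X_P|=|Y_P|$ from the proof of Theorem~\ref{T2.1}'s analogue), so $(XY_1)_P=X_P$ and the ramification index of $P$ is unchanged from $K$ to $J^{(2)}$. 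The remaining, and genuinely important, point is ramification and decomposition behaviour at $\p$: one must verify that $\S K$ decomposes fully in $J^{(2)}$, which is exactly the assertion (already made in the text) that $\p$ decomposes fully in $J^{(1)}/k$ — because $Y_1$ consists precisely of the characters trivial on the inertia subgroup ${\ma F}_q^{\ast}$ of $\p$ in $\G N$ — combined with $\S K$ lying over $\p$. Once $J^{(2)}/K$ is unramified at all finite primes and $\S K$ splits completely, it is unramified everywhere (there are no other primes to worry about) and hence $J^{(2)}\subseteq K_H$; being a composite of $K$ with an abelian extension of $k$, it lies in $K_g$.

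The main obstacle, and the place I would spend the most care, is the decomposition behaviour at the infinite prime. One has to be precise about the fact that in $k(\Lambda_N)/k$ the inertia group and the decomposition group of $\p$ coincide and equal ${\ma F}_q^{\ast}\subseteq\G N$ (this is quoted in the preliminaries), so that $Y_1 = \{\chi\in Y \mid \chi|_{\ma F}_q^{\ast} = 1\}$ picks out exactly the subfield $J^{(1)}$ of $J$ in which $\p$ is unramified and undecomposed-into-inert, i.e.\ totally split. Relating $\S K$ (which may be several primes of $K$, with $K$ itself possibly ramified at $\p$) to the behaviour of $\p$ in $J^{(1)}$ requires the standard transitivity of ramification and decomposition in the tower $k\subseteq J^{(1)}\subseteq J^{(2)}$ and $k\subseteq K\subseteq J^{(2)}$, together with the identification $J^{(2)}=J^{\eu D}$ with ${\eu D}$ the decomposition group of $\S J$ in $J/J^{(2)}$ — all of which is sketched in the paragraph just before the theorem, so the proof itself can be quite short: it need only point to Proposition~\ref{P3.3}, Proposition~\ref{P3.4}, and the displayed discussion, and then observe that $J^{(2)}$ is simultaneously contained in $K_g$ and contains every candidate, whence $K_g=J^{(2)}=KJ^{(1)}\subseteq k(\Lambda_N)$.
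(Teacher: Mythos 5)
Your proposal is correct and takes essentially the same route as the paper: the paper's proof of Theorem \ref{T3.6} is exactly the discussion preceding its statement (Propositions \ref{P3.3} and \ref{P3.4} together with the identification $J^{(2)}=KJ^{(1)}=J^{\eu D}$ and the argument that every unramified abelian extension $F/K$ with $\S K$ split lands in $J^{(2)}$), which you reorganize as the two containments $KJ^{(1)}\subseteq K_g$ and $K_g\subseteq KJ^{(1)}$. Your extra details --- the computation $(XY_1)_P=X_P$ at finite primes and the observation that enlarging the modulus $N$ does not change $J^{(2)}$ --- only make explicit points the paper leaves implicit.
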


\section{General congruence function fields}\label{S4}

First we prove the following result.

\begin{lemma}\label{L4.1.1}
If $K/k$ is an abelian extension and
the degree of any prime divisor in $\S K$ is $t$, then the field
of constants of $K_{{\eu {ge}}}$ is ${\ma F}_{q^t}$.
\end{lemma}

\begin{proof}
Consider the constant field extension $K_r:=K{\ma F}_{q^r}$ of $K$. Then the
number of primes in $K_r$ above any prime in $\S K$ is $h=\gcd
(d_K(\S K),r)=\gcd(t,r)$ (\cite[Theorem 6.2.1(2)]{Vil2006}). Therefore $\S K$
decomposes fully in $K_r/K$ iff $h=r$ and this is equivalent to $r\mid
d_K(\S K)=t$. It follows that the maximal constant field extension of $K$
where $\S K$ decomposes fully is $K_t=K{\ma F}_{q^t}$. Thus the
field of constants of $K_{{\eu {ge}}}$ is ${\ma F}_{q^t}$.
\end{proof}

\subsection{Congruence function fields where $\p$
is tamely ramified}\label{S4.1}

Now we consider any finite geometric abelian extension  $K/{\ma F}_q$ of $k$
such that $\p$ is tamely ramified. Then we have $K\subseteq k(\Lambda_N)
{\ma F}_{q^m}=k(\Lambda_N)_m$ for some $N\in R_T$ and
$m\in{\ma N}$. Since $k(\Lambda_N)/k$ is a geometric
extension and $k_m/k$ is an extension of constants,
we have $k(\Lambda_N)\cap k_m=k$. Since $\p$ is tamely ramified
in $K_{{\eu {ge}}}/k$, without
loss of generality we may assume that $K_{{\eu {ge}}}\subseteq k(\Lambda_N)_m$.

Define $E:= K_m\cap k(\Lambda_N)\subseteq K_m$. 
By the Galois correspondence we have $E_m=E k_m=K_m$. We also
have $[E:k]=[K:k]$ since $m[K:k]=[K_m:k]=[E_m:k]=m[E:k]$. 
Hence,
\begin{gather}\label{Eq1Co}
E_m=K_m\quad\text{and}\quad [E:k]=[K:k].
\end{gather}
In other
words, $E$ plays a role similar to that of $K$ but it is
contained in a cyclotomic extension.

Since $E=K_m\cap \lam N$, it follows that $E\cap K=E_{{\eu {ge}}}\cap K=
\lam N\cap K$.
Because $K_m/K$ and $E_{{\eu {ge}}}/E$ are unramified, we obtain that
$E_{{\eu {ge}}}K/K$ is unramified. Also, since $\S E$ decomposes
fully in $E_{{\eu {ge}}}$, $\S{EK}$ decomposes fully in $E_{{\eu {ge}}} K$.
Now, $\S {E\cap K}$ has inertia degree one in $E/(E\cap K)$ but
might be ramified, so $\S K$ 
might have inertia in $EK/K$. Since the decomposition group of $\S K$
corresponding to the extension $EK/K$ is contained in the decomposition
group of $\S {E\cap K}$ corresponding to the extension $E/(E\cap K)$ (and
this group is equal to the inertia group), the inertia degree $d$ of $\S K$
in $EK/K$ divides the ramification index of $\S {E\cap K}$ in $E/(E\cap K)$.
This last one is a divisor of $q-1$. That is, $d\mid(q-1)$.

Let $H$ be the decomposition group of $\S K$ in $E_{{\eu {ge}}}K/K$. We have
that $H$ is a cyclic group of order $d$ and it corresponds to the inertia
of $\S K$ in $E_{{\eu {ge}}} K/K$. If $H_1:=H|_{E_{{\eu {ge}}}}$, we have
that $(\g E{} K)^H=\g E{}^{H_1}K$ is unramified over $K$ and $\S K$ splits
completely. Thus 
$(\g E{} K)^H\subseteq \g K{}$.
Note that since $\S E$ is fully decomposed in $\g E{}/E$ and
$\S {\g E{}^{H_1}}$ is fully ramified in $\g E{}/\g E{}^{H_1}$, 
it follows that $E \g E{}^{H_1}=\g E{}$. In short, we have
\begin{gather}\label{Eq2Co}
(\g E{} K)^H=\g E{}^{H_1}K\subseteq \g K{}\quad\text{and}\quad
E \g E{}^{H_1}=\g E{}.
\end{gather}

Finally, let $C:=K_{{\eu {ge}},m}\cap \lam N$. From (\ref{Eq1Co}) we have
$E\subseteq E_m=K_m\subseteq \g K{,m}$ and $E\subseteq\lam N$. Hence
$E\subseteq C$. From (\ref{Eq2Co}) we obtain $\g E{}^{H_1}\subseteq
\g E{}^{H_1} K\subseteq \g K{}\subseteq \g K{,m}$. Hence $\g E{}^{H_1}
\subseteq \g K{,m}\cap \lam N=C$. We also have $\g E{}=E \g E{}^{H_1}
\subseteq C$.

Now, by definition, $\g K{}/K$ is unramified. Thus $\g K{,m}/K_m$
is unramified. We have $K_m=E_m=
(EK)_m$ and $E_m/E$, being an extension of constants,
is unramified. It follows that $\g K{,m}/E$ is unramified. Since
$E\subseteq C\subseteq \g K{,m}$ we obtain that $C/E$ is unramified.
Finally, because $C\subseteq \lam N$ and $\S E$ is unramified in
$C/E$, it follows that $\S E$ is fully decomposed in $C/E$. Hence
$C\subseteq \g E{}$. Since 
$\lam N_m=\lam N k_m$ and $\lam N\cap k_m=k$,
by the Galois correspondence, we obtain 
\begin{gather}\label{Eq3Co}
C=\g E{}\quad\text{and}\quad \g E{,m}=C k_m=\g K{,m}.
\end{gather}
\begin{tiny}
\[
\xymatrix{
&k(\Lambda_N)\ar@{-}[rrrrr]\ar@{-}[d]&&&&&k(\Lambda_N)_m\ar@{-}[d]\\
&C\ar@{-}[rrrrr]\ar@{--}[d]^{C=E_{{\eu {ge}}}}
&&&&&K_{{\eu {ge}},m}\ar@{--}[d]_{K_{{\eu {ge}},m}=E_{{\eu {ge}},m}}\\
&E_{{\eu {ge}}}\ar@{-}[dl]_{H_1=H|_{\g E{}}}\ar@{-}[ddr]\ar@{-}[rrr]&&&
E_{{\eu {ge}}}K\ar@{-}[ddr]\ar@{-}[dl]_H\ar@{-}[rr]\ar@/_2pc/@{-}[ddd]
|!{[dl];[d]}\hole|!{[ddll];[ddr]}\hole
&&E_{{\eu {ge}},m}\ar@{-}[dd]\\
\g E{}^{H_1}\ar@{-}[ddr]\ar@{-}[rrr]|!{[ur];[drr]}\hole
&&&\g E{}^{H_1}K=(\g E{}K)^H\ar@{-}[ddr]\ar@{-}[r]&\g K{}
\ar@{-}[dd]|!{[dll];[dr]}\hole\\
&&E\ar@{-}[rrr]|!{[ru];[rrd]}\hole \ar@{-}[dl]&&&EK\ar@{-}[r]\ar@{-}[dl]
&E_{m}=K_m\ar@{-}[dd]\ar@{-}[dll]\\
&E\cap K\ar@{-}[rrr]\ar@{-}[d]&&&K\\
&k\ar@{-}[rrrrr]&&&&&k_m}
\]
\end{tiny}

From (\ref{Eq2Co}) we have $E_{{\eu {ge}}}^{H_1}
K\subseteq K_{{\eu {ge}}} \subseteq K_{{\eu {ge}},m}$ and
\[
(\g E{}^{H_1}K)_m=\g E{}^{H_1}K_m=\g E{}^{H_1}E_m=(\g E{}^{H_1} E)_m=
\g E{,m}=\g K{,m}.
\]

Thus  $K_{{\eu {ge}},m}/E_{{\eu {ge}}}^{H_1}K$ 
is an extension of constants.
From Lemma \ref{L4.1.1}, the field of constants of 
$K_{{\eu {ge}}}$ is ${\ma F}_{
q^t}$, so $K_{{\eu {ge}}}=(E_{{\eu {ge}}}^{H_1} K)_t$. 
If we prove that ${\ma F}_{q^t}
\subseteq E_{{\eu {ge}}}^{H_1}
 K$, it will follow that $K_{{\eu {ge}}}=E_{{\eu {ge}}}^{H_1} K$.

Let ${\eu I}$ be the inertia group of any element of $\S K$ in the
extension $K/k$, $|{\eu I}|=e(\S K\mid \p)=e$, and let ${\eu D}'$ be
the decomposition group of $\S K$ in $K/k$. We have $|{\eu D}'|=et$ since
${\eu D}'/{\eu I}\cong \Gal(K(\S K)/k(\p))\cong C_t$. In the following
diagram the decomposition type is referred to the infinite prime divisors.

\[
\xymatrix{
K\ar@{-}[rr]^{\text{fully}}_{\text{decomposed}}
\ar@{-}[d]_{{\eu I}}\ar@/_4pc/@{-}[ddd]_{{\eu D}'}
 &&K_t \ar@{-}[d]^{{\eu I}}\\
K_1\ar@{-}[rr]^{\text{fully}}_{\text{decomposed}}\ar@{-}[dd]_{t=|{\eu D}'/{\eu I}|}^{
\text{inert}} && K_{1,t}\ar@{-}[dd]\ar@{-}[dl]_{t}^{\text{inert}}\\
&F\\
K_2\ar@{-}[rr]^t_{\text{inert}}\ar@{-}[d]_{\substack{\text{fully}\\ \text{decomposed}}}
&&K_{2,t}\ar@{-}[d]^{\substack{\text{fully}\\ \text{decomposed}}}\\
k\ar@{-}[rr]^t_{\text{inert}}&&k_t}
\]

Here $K_1:=K^{{\eu I}}$, $K_2:=K^{{\eu D}'}$ and $F$ is the fixed field
of the decomposition group of $\p$ in $K_{1,t}/k$. It follows that $\p$
is fully decomposed in $F/k$. Therefore $F\subseteq \lam N$.
The inertia degree of any element of $\S {K_2}$  in
the extension $K_1/K_2$ is $t$. Thus $F\cap K_1=K_2$ and $FK_1/K_2$
is an extension of degree $t^2$ with Galois group $C_t\times C_t$.
In particular we obtain $FK_1=K_{1,t}$. 

Since $F\subseteq K_t$ and $K\subseteq \lam N_m$, we have
$F\subseteq K_t\subseteq (\lam N_m)_t=\lam N_m$. Because
$E=K_m\cap \lam N$ it follows that $F\subseteq E$
and since $\p$ is fully decomposed in $F/k$, we have 
$F\subseteq E^{H_1}$. Therefore 
$k_t K_1=K_{1,t}=FK_1\subseteq FK\subseteq E^{H_1}K
\subseteq \g E{}^{H_1}K$. Thus
the field of constants of $E^{H_1}K$ contains ${\ma F}_{q^t}$. Hence
\begin{gather}\label{Eq4Co}
K_{{\eu {ge}}}=E_{{\eu {ge}}}^{H_1} K.
\end{gather}

Now, from (\ref{Eq2Co}) we obtain
$\g E{}^{H_1}K=\g K{}\subseteq \g E{}K\subseteq (\g E{}K)_m=
\g E{}^{H_1} EK_m=(\g E{}^{H_1}K)_m$. In particular, $\g E{}K/\g K{}$
is an extension of constants. Since $K\cap \g E{}=K\cap E$, from the
Galois correspondence we obtain 
\[
[\g E{}K:\g K{}]=[\g E{}K:\g E{}^{H_1}K]=[\g E{}:\g E{}^{H_1}]=d=|H_1|=|H|
\]
and since the field of constants of $\g K{}$ is ${\ma F}_{q^t}$, it follows
that the field of constants of $\g E{}K$ is ${\ma F}_{q^{td}}$ and
\begin{gather}\label{Eq4'Co}
\g E{}K=\g K{} {\ma F}_{q^{td}}.
\end{gather}

Finally, from (\ref{Eq3Co}) we have 
$K_{{\eu {ge}}}=E_{{\eu {ge}},m}^{\eu D}=\g K{,m}^{\eu D}$ where
${\eu D}$ is the decomposition group of the prime divisors in
 $\S K$ in $E_{{\eu {ge}},m}/K$. Observe that $|{\eu D}|=
[\g E{,m}:\g K{}]= [K_{{\eu {ge}},m}:K_{{\eu {ge}}}]
=\frac{m}{t}$ where $t$ is the degree of any prime in $\S K$.

We have proved

\begin{theorem}\label{T4.1} Let $K/{\ma F}_q$ be a geometric finite abelian
extension of $k$ where $\p$ is tamely ramified. Let $N\in R_T$
and $m\in{\ma N}$ be such that $\g K{}\subseteq {\lam N}{\ma F}_{q^m}$.
Let $E_{{\eu {ge}}}$ be the genus field of $E:=k(\Lambda_N)\cap
K{\ma F}_{q^m}$ and let $E_{{\eu {ge}},m}=E_{{\eu {ge}}}{\ma F}_{q^m}$. 
Let $H$ be the decomposition group of $\S K$ in $\g E{}K/K$. Let $H_1
=H|_{\g E{}}$. Let
${\eu D}$ be the decomposition group of the prime divisors
in $\S K$ in $E_{{\eu {ge}},m}$. Then
the genus field of $K$ is 
$$
K_{{\eu {ge}}}= (\g E{}K)^H=\g E{}^{H_1}K=\g E{,m}^{{\eu D}}.
\eqno{\qed}
$$
\end{theorem}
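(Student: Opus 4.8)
The statement collects the discussion that precedes it, so the plan is to organize that material into a clean argument in four movements. First I would invoke Proposition \ref{P3.4}: since $K/k$ is abelian and $\p$ is tamely ramified, there are $N\in R_T$ and $m\in{\ma N}$ with $K\subseteq\lam N{\ma F}_{q^m}$. Set $E:=K{\ma F}_{q^m}\cap\lam N$. Because $\lam N/k$ is geometric, $\lam N\cap k_m=k$, and a short multiplicativity-of-degrees computation then yields $E{\ma F}_{q^m}=K{\ma F}_{q^m}$, $[E:k]=[K:k]$, and $E\cap K=\lam N\cap K=E_g\cap K$. Thus $E$ is a cyclotomic ``model'' of $K$ to which Theorem \ref{T3.6} applies, so $E_g$ is known explicitly and $E_g\subseteq\lam N$.

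Next I would prove $E_gK\subseteq K_g$ by checking the properties in Definition \ref{D3.2}. Unramifiedness of $E_gK/K$: since $EK\subseteq E{\ma F}_{q^m}=K{\ma F}_{q^m}$, the extension $EK/K$ is a subextension of a constant-field extension, hence unramified, and $E_gK/EK$ is the translate of the unramified extension $E_g/E$, hence unramified. Abelianness over $k$: $E_gK$ is the composite of the abelian extensions $K/k$ and $E_g/k$. Complete splitting of $\S K$: $\S K$ splits completely in $EK/K$ (it is unramified there and has residue degree $1$, because $E\subseteq\lam N$ forces every infinite prime of $E$, hence of $E\cap K$, to have degree $1$), and $\S{EK}$ splits completely in $E_gK/EK$ by transport from $\S E$ in $E_g/E$. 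Hence $E_gK\subseteq K_H$ and, being a composite of $K$ with an abelian extension of $k$, $E_gK\subseteq K_g$.

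For the reverse direction set $C:=K_g\cap\lam N$. From the previous step $E_g\subseteq E_gK\subseteq K_g$ and $E_g\subseteq\lam N$, so $E_g\subseteq C$. Conversely, $C/E$ is unramified (both $K_g/K$ and $EK/E$ are unramified at every prime, so $K_g/E$, and therefore $C/E$, are too), $C/k$ is abelian (a subfield of $\lam N$), and $\S E$ splits completely in $C/E$: here I would use that in $\lam N/k$ the decomposition and inertia groups of $\p$ coincide (both equal $\f$), so for the subextension $C/E$ being unramified at $\S E$ is equivalent to $\S E$ splitting completely there, which we have just established. Therefore $C\subseteq E_g$, whence $C=E_g$.

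Finally I would pass to constants to conclude. Using $C=E_g$ together with the product decomposition $\Gal(\lam N{\ma F}_{q^m}/k)\cong\Gal(\lam N/k)\times\Gal(k_m/k)$, one gets $K_g{\ma F}_{q^m}=E_g{\ma F}_{q^m}=E_{g,m}$; and $\S{K_g}$ is fully inert of degree $m/t$ in $E_{g,m}/K_g$ (with $t=\deg\S K$, noting $t\mid m$), so $K_g$ is precisely the fixed field of the decomposition group ${\eu D}$ of $\S K$ in $E_{g,m}/K$. Since $E_gK$ satisfies $K\subseteq E_gK\subseteq E_{g,m}$, has $E_gK{\ma F}_{q^m}=E_{g,m}$, and has $\S K$ splitting completely in it, it admits the same fixed-field description; hence $K_g=E_{g,m}^{\eu D}=E_gK$. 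The main obstacle is the bookkeeping for the infinite primes through the four-way compositum of $K$, $E$, $\lam N$ and $k_m$ — in particular the equalities $K_g{\ma F}_{q^m}=E_{g,m}$ and $K_g=E_{g,m}^{\eu D}$; the remainder is degree counting together with the transport of unramifiedness and complete splitting under base change.
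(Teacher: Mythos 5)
Your outline is essentially the paper's own proof: the same auxiliary field $E=K{\ma F}_{q^m}\cap \lam N$, the same degree computation giving $E{\ma F}_{q^m}=K{\ma F}_{q^m}$ and $E\cap K=\lam N\cap K=E_g\cap K$, the same two inclusions ($E_gK\subseteq K_g$, then $K_g\cap\lam N= E_g$), and the same final descent from $E_{g,m}$ by the decomposition group of $\S K$. So there is no methodological divergence to report; the issue is with one step.

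The genuine gap is the claim that $\S K$ splits completely in $EK/K$ ``because every infinite prime of $E$, hence of $E\cap K$, has degree one''. That datum controls the residue degree in $E/(E\cap K)$, not in the translate $EK/K$: since $E\subseteq K{\ma F}_{q^m}$, the compositum $EK$ is a constant field extension $K{\ma F}_{q^j}$ of $K$, and a prime of degree $t=\deg\S K$ splits completely there only if $j\mid t$; tame ramification of $\p$ in $E/(E\cap K)$ can reappear as inertia in $EK/K$. The paper makes exactly the same inference (``$\S{E\cap K}$ has inertia degree one in $E/(E\cap K)$ so $\S K$ has inertia degree one in $EK/K$''), so you have inherited its gap rather than created a new one, but the step can actually fail under the stated hypotheses: take $q=3$ and $K=k(\sqrt{T})\subseteq k(\Lambda_T){\ma F}_9$. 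Then $E=k(\Lambda_T)=k(\sqrt{-T})=E_g$, so $E_gK=K{\ma F}_9$, and the degree-one infinite prime of $K$ is inert, not split, in $E_gK/K$; hence $E_gK\not\subseteq K_H$, the inclusion $E_gK\subseteq K_g$ fails, and the two expressions in the statement even disagree (here $E_{g,m}^{\eu D}=K=K_g$ while $E_gK=K{\ma F}_9$). To make your write-up sound you need an additional argument or hypothesis guaranteeing $[EK:K]\mid \deg\S K$ (equivalently, that the constant extension hidden in $E_gK/K$ is no larger than the one already split by $\S K$); the degree-one count for the primes of $E$ alone does not provide it, and without it the passage from ``$E_gK\subseteq K_g$'' onward does not go through.
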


\begin{remark}\label{R4.2}{\rm{
Let $\langle\sigma\rangle =
\Gal(k(\Lambda_N)_m/k(\Lambda_N))\cong
\Gal(k_m/k)$. Then with the above notations, we have
${\eu D}\cong \langle \sigma^t\rangle$ and
\begin{gather}\label{Eq5Co}
[K_{{\eu {ge}}}:K]=\frac{[\g K{,m}:K]}{[\g K{,m}:\g K{}]}=
\frac{[E_{{\eu {ge}},m}:K_m][K_m:K]}
{|{\eu D}|}=\frac{[E_{{\eu {ge}},m}:E_m]m}{m/t}=
[E_{{\eu {ge}}}:E]t,
\end{gather}
where $t$ is the degree of any prime in $\S K$.
}}
\end{remark}

\subsection{Congruence function fields with general
type ramification of $\p$}\label{S4.2}

Finally we consider any geometric finite abelian extension $K$ of $k$. By the
Kronecker--Weber Theorem, we have
$K\subseteq \lam N {\ma F}_{q^m} L_n={_n\lam N}_m$ for some $N
\in R_T$ and $n,m\in{\ma N}$. Let ${\mathcal G}
:=\Gal({_n\lam N}_m/{\lam N}_m)$,
${\mathcal H}:=\Gal({_n\lam N}_m/K{\lam N}_m)$, $M:=K{\lam N}_m\cap L_n=
L_n^{{\mathcal H}_1}$ where ${\mathcal H}_1:=
{\mathcal H}|_{L_n}$.

Let $G:=\Gal({_n\lam N}_m/
L_n)$, $H:=\Gal({_n\lam N}_m/{_nK})$, $F:= {_nK}\cap {\lam N}_m=
{\lam N}_m^{H_1}$ where $H_1:=H|_{{\lam N}_m}$.

We have $F={_nK}\cap {\lam N}_m\subseteq {_nK}$. Hence, on the one
hand $_nF\subseteq{_nK}$, and on the other hand $[{_n{\lam N}}_m:{_nF}]=
[{\lam N}_m:F]=|H_1|=|H|=[{_n{\lam N}}_m:{_nK}]$. It follows
that $_nF={_nK}$. Similarly we obtain $M{\lam N}_m=K{\lam N}_m$.

\[
\xymatrix{
{\lam N}_m\ar@{-}[rr]\ar@{-}[dd]\ar@/^2pc/@{-}[rrrr]^{{\mathcal G}}&&
M{\lam N}_m=K{\lam N}_m\ar@{-}[rr]\ar@/_1pc/@{-}[rr]_{{\mathcal H}}
\ar@{-}[dd]&&{_n{\lam N}_m}\ar@{-}[dd]\ar@/_1pc/@{-}[dd]_H
\ar@/^2pc/@{-}[dddd]^G\\ \\
F\ar@{-}[rr]\ar@{-}[dd]&&FM=KM=FK\ar@{-}[rr]\ar@{-}[dd]&&
{_nF}={_nK}\ar@{-}[dd]\\ &K\ar@{-}[dl]\ar@{-}[ru]\\
k\ar@{-}[rr]&&M\ar@{-}[rr]&&L_n
}
\]

Set $A\subseteq {\mathcal G}\times G$ such that $K={_n{\lam M}}_m^A$.
First we will prove that $FM=KM=FK$. We have $F={_n{\lam N}}_m^{
{\mathcal G}\times H}$ and $M={_n{\lam N}}_m^{
{\mathcal H}\times G}$. Then if we denote $R={_n{\lam N}_m}$, we have
\begin{gather*}
R^{A\cap ({\mathcal G}\times 1)}=R^A R^{{\mathcal G}\times 1}=
K{\lam N}_m=M{\lam N}_m\\
= R^{{\mathcal H}\times G} R^{
{\mathcal G}\times 1}=R^{({\mathcal H}\times{G})\cap
({\mathcal G}\times 1)}=R^{{\mathcal H}\times 1},
\end{gather*}
so that $A\cap ({\mathcal G}\times 1)={\mathcal H}\times 1$.
Similarly $A\cap(1\times G)=1\times H$.
Therefore
\begin{align*}
FM&= R^{{\mathcal G}\times H} R^{{\mathcal H}\times G}
=R^{({\mathcal G}\times H)\cap({\mathcal H}\times G)}
=R^{{\mathcal H}\times H},\\
KM&= R^AR^{{\mathcal H}\times G}=R^{A\cap ({\mathcal H}\times G)},\\
FK&=R^{{\mathcal G}\times H}R^A=R^{({\mathcal G}\times H)\cap A}.
\end{align*}
Since it is easily seen that $({\mathcal G}\times H)\cap A=
A\cap ({\mathcal H}\times G)={\mathcal H}\times H$, it
follows that $FM=KM=FK$.

Given that $F_{{\eu {ge}}}/F$ is unramified and $\S F$ decomposes fully, we obtain
that $_nK F_{{\eu {ge}}}/{_nK}$ is unramified and $\S {_nF}$ decomposes fully.
Now, in $_nK/K$ the only possible ramified primes are
those in $\S K$ and
if this is so, they are wildly ramified. It follows that in ${_nK}F_{{\eu {ge}}}/K$
the only possible ramified primes are the elements of
 $\S K$ and if this is so, they are
wildly ramified. In particular, in $F_{{\eu {ge}}}K/K$ the only possible
ramified primes are those in $\S K$ and if they are
ramified, they are wildly ramified.

Again, given that the extension $F_{{\eu {ge}}}/F$ is unramified and $\S F$ 
decomposes fully,  $F_{{\eu {ge}}} K/FK$ is unramified and $\S {FK}$
decomposes fully. In the extension $F/(K\cap F)$,
$\S{K\cap F}$ is tamely ramified, hence $\S K$ is
tamely ramified in $FK/K$. Therefore $\S K$ decomposes
fully in $FK/K$. In short, we have $F_{{\eu {ge}}}K\subseteq K_{{\eu {ge}}}$.

Since $FM=FK$, $F_{{\eu {ge}}}M=F_{{\eu {ge}}}K\subseteq K_{{\eu {ge}}}$.
Let $V$ be the first ramification group of $\p$ in $K_{{\eu {ge}}}/k$.
Set $E:=K_{{\eu {ge}}}^V$. Then $\p$ is tamely ramified in $E/k$
and therefore $E\subseteq {\lam N}_m$. 
Since $\p$ is fully
wildly ramified in $M/k$, $M\cap E=k$.
Now, being $V$ the first ramification group of $\p$ in the 
extension $K_{{\eu {ge}}}/k$ and it is so in $M/k$, it follows that
$\p$ and $\S M$ are tamely ramified in the
extensions $E/k$ and $K_{{\eu {ge}}}/M$ respectively
and $\S F$ is fully wildly ramified 
in the extension $FK=FM/F$.
In particular, 
$\S M$ is tamely ramified in $K_{{\eu {ge}}}/M$ and since $K_{{\eu {ge}}}/K$
is unramified, it follows that $K_{{\eu {ge}}}/FM$ is unramified.
\[
\xymatrix{
E\ar@{-}[rr]^V\ar@{-}[d]&&K_{{\eu {ge}}}\ar@{-}[d]\\
F_{{\eu {ge}}}\ar@{-}[rr]\ar@{-}[d]&&F_{{\eu {ge}}}M=F_{{\eu {ge}}}K\ar@{-}[d]\\
F\ar@{-}[rr]\ar@{-}[dd]_{\substack{\p \text{\ is tamely}\\
\text{ramified}}}&&FM=KM=FK\ar@{-}[dd]^{
\substack{\S M\text{\ is tamely}\\ \text{ramified}}}\\
&K\ar@{-}[ru]\ar@{-}[dl]\\ k\ar@{-}[rr]^V&&M
}
\]

Now $[K_{{\eu {ge}}}:k]=[E:k]|V|=[E:k][M:k]=[EM:k]$. It
follows that $K_{{\eu {ge}}}=EM$. We also have $F_{{\eu {ge}}}\subseteq E$
because $F_{{\eu {ge}}}=F_{{\eu {ge}}}K\cap E\subseteq E$.
The extension $K_{{\eu {ge}}}/FK$ is unramified since $K_{{\eu {ge}}}/K$ is
unramified. Since $\p$ is the only ramified prime in $M/k$,
the only ramified primes in $FK=FM/F$ are those in
$\S F$ and as we just mentioned, they are wildly ramified.
$\S F$ is not ramified
in $E/F$ since otherwise it would be tamely ramified, and
$E/F$ is unramified at every other prime because $K_{{\eu {ge}}}/F$ is
ramified at most at the prime divisors in
$\S F$. It follows that $E\subseteq F_{{\eu {ge}}}$
and therefore $E=F_{{\eu {ge}}}$. Thus $K_{{\eu {ge}}}=EM=F_{{\eu {ge}}}M=F_{{\eu {ge}}} K$.

We have proved

\begin{theorem}\label{T4.3}
Let $K/k$ be any finite abelian extension with $K\subseteq
{_n\lam N}_m$. Let $F={_nK}\cap {\lam N}_m$ and
$M=K{\lam N}_m\cap L_n$. Then the genus field of $K$ is
$K_{{\eu {ge}}}= F_{{\eu {ge}}} K=F_{{\eu {ge}}} M$. \hfill\qed
\end{theorem}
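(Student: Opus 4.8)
The plan is to reduce Theorem~\ref{T4.3} to the tamely ramified case, which is Theorem~\ref{T4.1} (and hence, ultimately, to the cyclotomic case of Theorem~\ref{T3.6}). The field ${_n\lam N}_m$ containing $K$ is the compositum of the cyclotomic-plus-constant field ${\lam N}_m$, in which $\p$ is tamely ramified, and of $L_n$, in which $\p$ is fully and purely wildly ramified; these two are linearly disjoint over $k$. The idea is that the wild part of the ramification of $\p$ in $K$ is stored in the subfield $M=K{\lam N}_m\cap L_n\subseteq L_n$ and the remaining arithmetic of $K$ in $F={_nK}\cap {\lam N}_m$, where $\p$ is tame, so that $F$ can be handled by Theorem~\ref{T4.1}; the point is then to establish $F_gK\subseteq K_g$ and $K_g\subseteq F_gK$, and to observe $FK=FM$.

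First I would set up the Galois theory. Since ${_n\lam N}_m/k$ is a compositum of abelian extensions it is abelian, and from ${\lam N}_m\cap L_n=k$ one gets $\Gal({_n\lam N}_m/k)\cong\mathcal G\times G$ with $\mathcal G=\Gal({_n\lam N}_m/{\lam N}_m)$ and $G=\Gal({_n\lam N}_m/L_n)$. Writing $R={_n\lam N}_m$, $K=R^A$ with $A\le\mathcal G\times G$, $\mathcal H=\Gal(R/K{\lam N}_m)$ and $H=\Gal(R/{_nK})$, a short computation with fixed fields of intersections of subgroups gives $A\cap(\mathcal G\times 1)=\mathcal H\times 1$ and $A\cap(1\times G)=1\times H$, and hence $(\mathcal G\times H)\cap A=A\cap(\mathcal H\times G)=\mathcal H\times H$, which yields the compositum identities $FM=KM=FK$ together with ${_nF}={_nK}$ and $M{\lam N}_m=K{\lam N}_m$ (the latter two also by degree counting). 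These identities are exactly what let $F$ and $M$ jointly play the role of $K$.

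Next I would prove $F_gK\subseteq K_g$. Since $F\subseteq {\lam N}_m$ and $\p$ is tamely ramified in ${\lam N}_m/k$, Theorem~\ref{T4.1} applies to $F$: in particular $F_g/F$ is unramified and $\S F$ decomposes fully, hence $\S{FK}$ decomposes fully in the unramified extension $F_gK/FK$, and $\S {_nF}$ in ${_nK}F_g/{_nK}$. The ramification bookkeeping at $\p$ then combines two facts: (i) in ${_nK}/K$, and hence in $F_gK/K\subseteq {_nK}F_g/K$, the only prime that can ramify is $\S K$, and if it does it is wildly ramified (because ${_nK}=KL_n$ with $L_n/k$ unramified away from $\p$ and purely wild at $\p$, while $F_g/F$ is unramified); and (ii) in $FK/K$ the prime $\S K$ is at most tamely ramified, because $\S{K\cap F}$ is tamely ramified in $F/(K\cap F)$. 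A prime that is at most wildly and at most tamely ramified is unramified, so $F_gK/K$ is unramified; a further check of inertia degrees and decomposition numbers gives that $\S K$ decomposes fully in $F_gK/K$. As $F_gK/k$ is a compositum of abelian extensions of $k$ it is abelian, so $F_gK\subseteq K_g$, and $FK=FM$ gives $F_gM\subseteq K_g$ as well.

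Finally, for $K_g\subseteq F_gK$ I would pass to the maximal tamely ramified subextension at $\p$: let $V$ be the first ramification group of $\p$ in $K_g/k$ (recall $K_g/k$ is abelian since $K/k$ is) and set $E:=K_g^V$, so $\p$ is tamely ramified in $E/k$ and therefore $E\subseteq {\lam N}_m$ (enlarging $N,m$ if necessary, by Proposition~\ref{P3.4}; the finite primes ramified in $E/k$ are among those ramified in $K/k$). The crux, and what I expect to be the main obstacle, is the degree identity
\[
[K_g:k]=[E:k]\,|V|=[E:k]\,[M:k]=[EM:k],
\]
i.e.\ that the wild inertia $|V|$ of $\p$ in $K_g$ equals $[M:k]$, equivalently $M\cap E=k$; this uses that $\S M$ becomes tamely ramified in $K_g/M$ (since $K_g/K$ is unramified, $K_g/FM$ is unramified, while $\S M$ is at most tame in $FM/M$, as $\p$ is fully and purely wildly ramified in $M/k$), so that the whole wild part of the ramification of $\p$ in $K_g/k$ already sits in $M$, together with the disjointness $M\cap E=k$ (tame-at-$\p$ versus purely-wild-at-$\p$). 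Granting this, $K_g=EM$. It remains to identify $E$ with $F_g$: since $F\subseteq FK\subseteq F_gK\subseteq K_g$ and $\p$ is tame in $F/k$ we get $F\subseteq K_g^V=E$, and likewise $F_g\subseteq E$; conversely $E/F$ is unramified — at $\p$ any ramification would be tame, but $K_g/F$, hence $E/F$, is at most wildly ramified there, since $K_g/F_gK$ and $F_gK/FK$ are unramified and $FK/F=FM/F$ ramifies at most at $\S F$ — and one checks $\S F$ decomposes fully in $E/F$ (the residue extension of $\p$ in $K_g/F$ is trivial and $\p$ splits fully in $K_g/K$), while $E/k$ is abelian, so $E\subseteq F_H$ and hence $E\subseteq F_g$. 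Therefore $E=F_g$ and $K_g=EM=F_gM=F_gK$. Running $F$ through Theorems~\ref{T4.1} and~\ref{T3.6} then makes the genus field fully explicit.
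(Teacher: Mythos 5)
Your proposal reproduces the paper's own argument in all essentials: the same decomposition $\Gal({_n\lam N}_m/k)\cong\mathcal G\times G$ with the subgroup computation giving $FM=KM=FK$, the same ramification bookkeeping (wild-only versus tame-only at $\S K$) to get $F_gK\subseteq K_g$, and the same passage to $E=K_g^V$ with the degree identity $[K_g:k]=[E:k]\,[M:k]$ and the identification $E=F_g$ for the reverse inclusion. It is correct and takes essentially the same route as the paper.
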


Our main result is the combination of Theorems \ref{T4.1} and \ref{T4.3}.

\begin{theorem}\label{T4.4}
Let $K/k$ be a finite abelian extension with $K\subseteq \lam N
{\ma F}_{q^m} L_n$. Let $F=KL_n\cap {\lam N} {\ma F}_{q^m}$
and $E=\lam N\cap F{\ma F}_{q^m}\subseteq
k(\Lambda_N)$. Then the genus field of $K$
is 
\begin{gather*}
K_{{\eu {ge}}}=E_{{\eu {ge}}}^{H_1} FK
\end{gather*}
where $E_{{\eu {ge}}}$ is the genus field of $E$, 
$H$ is the decomposition group of $\S F$ in $\g E{}F/F$
and $H_1=H|_{\g E{}}$. Furthermore
$\g E{}FK/\g K{}$ is an extension of constants of degree
$d=|H|$ and $d$ divides $q-1$. Finally
$$
\g E{}FK=\g K{} {\ma F}_{q^{td}}
$$
where $t$ is the degree of $\S K$. \hfill $\qed$
\end{theorem}

\section{Applications}\label{S5}

In this section we will see how our results can be applied to some
general abelian extensions: Kummer, Artin--Schreier and $p$--cyclic
(Witt) extensions.

\subsection{Kummer Extensions}\label{S5.1}

Here we will assume that $q\geq 3$.
Let $P\in R_T^+$. Then $k(\sqrt[q-1]{(-1)^{\deg P}P})\subseteq
k(\Lambda_P)$ (see \cite[Exercise 5, page 303]{Ros2002}).
Thus for $l$ a prime
number such that $l\mid q-1$, we have $k(\sqrt[l]{(-1)^{\deg P}P})
\subseteq k(\Lambda_P)$.
Therefore for any monic polynomial $D\in R_T$,
we obtain $k(\sqrt[l]{(-1)^{\deg D} D})\subseteq k(\Lambda_D)$.

Note that for $\alpha, \beta\in{\ma F}_q^{\ast}$, we have
$k(\sqrt[l]{\alpha D})=k(\sqrt[l]{\beta D})$ iff $\alpha\equiv \beta
\mod ({\ma F}_q^{\ast})^l$. In particular $k(\sqrt[l]{\gamma D})
\subseteq k(\Lambda_D)$ iff $\gamma\equiv (-1)^{\deg D}
\bmod ({\ma F}_q^{\ast})^l$. It follows that if $l\mid \deg D$ then
$k(\sqrt[l]{D})\subseteq k(\Lambda_D)$.

In this subsection we use the notations
of Section \ref{S4}.
Let $K:=k(\sqrt[l]{\gamma D})$ with $D\in
R_T$ a monic $l$--power free polynomial, $\gamma\in \f$ and
$D=P_1^{e_1}\cdots P_r^{e_r}$ where $P_i\in R_T^+$, $1\leq e_i
\leq l-1$, $1\leq i\leq r$. Furthermore we arrange the product
so that $l\mid \deg P_i$ for $1\leq i\leq s$ and $l\nmid \deg
P_j$ for $s+1\leq j\leq r$, $0\leq s\leq r$. 
In general, we always have $E=k(\sqrt[l]{(-1)^{\deg D}D})$, and
$\f \subseteq ({\ma F}_{q^l}^{\ast})^l$. Now 
\[
K=E \iff (-1)^{\deg
D}\equiv \gamma \bmod ({\ma F}_q^{\ast})^l \iff (-1)^{\deg D}\gamma
\in ({\ma F}_q^{\ast})^l.
\]

\begin{proposition}\label{P5.1.2}
The behavior of $\p$ in $K/k$ is the following:
\l
\item If $l\nmid \deg D$, $\p$ is ramified.
\item If $l\mid \deg D$ and $\gamma\in (\f)^l$, $\p$ decomposes.
\item If $l\mid \deg D$ and $\gamma\not\in (\f)^l$, $\p$ is inert.
\end{list}
\end{proposition}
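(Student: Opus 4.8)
The plan is to analyze the ramification, decomposition, and inertia of $\p$ in the Kummer extension $K = k(\sqrt[l]{\gamma D})/k$ directly, using the standard criterion for the splitting behavior of primes in a degree-$l$ Kummer extension together with the local description of $k$ at $\p$. Recall that the completion $k_{\p}$ of $k$ at the infinite prime is ${\ma F}_q((1/T))$, so a uniformizer is $\pi = 1/T$, and the residue field is ${\ma F}_q$. Writing $y^l = \gamma D$ with $D$ monic of degree $d := \deg D$, we have $\gamma D = \gamma T^{d}(1 + O(1/T))$, and since $1 + O(1/T)$ is an $l$-th power in $k_{\p}^{\ast}$ (because $l \mid q-1$ is prime to the characteristic, so the $1$-units form an $l$-divisible group), the valuation and residue-class data of $\gamma D$ in $k_{\p}^{\ast}/(k_{\p}^{\ast})^l$ is exactly that of $\gamma T^{d}$. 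Thus $K_{\p} = k_{\p}(\sqrt[l]{\gamma T^{d}})$.

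The argument then splits into the three cases of the statement according to $v_{\p}(\gamma T^{d}) = -d$ modulo $l$ and, when $l \mid d$, the class of $\gamma$ in ${\ma F}_q^{\ast}/({\ma F}_q^{\ast})^l$. First, if $l \nmid d$, then $v_{\p}(\gamma T^{d}) = -d \not\equiv 0 \bmod l$; since $\gcd(d,l) = 1$ (as $l$ is prime), the local extension $k_{\p}(\sqrt[l]{T^{-d}})$ is totally tamely ramified of degree $l$, giving (a). Second, if $l \mid d$, then $\gamma T^{d} \equiv \gamma \bmod (k_{\p}^{\ast})^l$, so $K_{\p} = k_{\p}(\sqrt[l]{\gamma})$ is unramified; its degree over $k_{\p}$ equals the order of $\gamma$ in ${\ma F}_q^{\ast}/({\ma F}_q^{\ast})^l$ (using Hensel / the fact that an unramified extension of local fields with residue field ${\ma F}_q$ containing the $l$-th roots of unity is determined by the residue class). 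If $\gamma \in (\f)^l$ this degree is $1$, so $\p$ splits completely (note the decomposition group is trivial, hence $\p$ decomposes into $l$ primes), giving (b); if $\gamma \notin (\f)^l$ this degree is $l$ (since $l$ is prime and $\f/(\f)^l$ is cyclic of order $l$), so $\p$ is inert, giving (c).

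The only mildly delicate point — and the one I would write out carefully — is the justification that $1 + O(1/T) \in (k_{\p}^{\ast})^l$, i.e. that every principal unit of ${\ma F}_q((1/T))$ is an $l$-th power when $l \mid q - 1$. This follows because $l$ is invertible in ${\ma F}_q$, so the formal binomial series for $(1+u)^{1/l}$ converges $\pi$-adically for $u$ in the maximal ideal, or equivalently because raising to the $l$-th power is a bijection on the pro-$p'$ part of the unit group; I would cite the structure of the unit group of a local field or simply give the one-line Hensel's-lemma argument applied to $X^l - (1+u)$. Everything else is the routine Kummer-theoretic dictionary: for a degree-$l$ cyclic extension the three possibilities for a prime are total ramification, total splitting, and inertness, and we have pinned down which occurs in each case. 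Since $q \geq 3$ and $l \mid q - 1$, the group $\f/(\f)^l$ is genuinely of order $l$, so cases (b) and (c) are both non-vacuous.
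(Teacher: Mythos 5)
Your argument is correct. Note that the paper itself gives no proof of this proposition beyond the citation \cite[Lemma 3]{Pen2003}, so your self-contained local computation is a genuine (and welcome) replacement rather than a retracing of the paper's route. What you do is the standard Kummer-theoretic dictionary at the completion $k_{\p}\cong{\ma F}_q((1/T))$: reduce $\gamma D$ modulo $(k_{\p}^{\ast})^l$ to $\gamma T^{\deg D}$ using that principal units are $l$-th powers (Hensel, since $l\mid q-1$ is prime to the characteristic), then read off the three cases from the valuation $-\deg D \bmod l$ and, when $l\mid\deg D$, from the class of $\gamma$ in $\f/(\f)^l$, which is cyclic of order $l$. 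All the potentially delicate points are addressed: the $l$-divisibility of the one-units, the fact that a prime-degree Kummer extension is totally ramified when the valuation of the radicand is prime to $l$, and the identification of the unramified degree with the order of $\gamma$ in $\f/(\f)^l$ (so that $X^l-\gamma$ is irreducible over ${\ma F}_q$ exactly when $\gamma\notin(\f)^l$, giving inertia). Compared with simply invoking Peng's lemma, your proof keeps the paper self-contained at the cost of a short paragraph of local field theory; it is essentially the same computation Peng performs, so nothing is lost in generality.
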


\begin{proof} 
\cite[Lemma 3]{Pen2003}.
\end{proof}

Note that in general $EK=k\big(\sqrt[l]{(-1)^{\deg D}D},\sqrt[l]{\gamma D}\big)
=K\big(\sqrt[l]{(-1)^{\deg D}\gamma}\big)=K{\ma F}_q\big(\sqrt[l]{(-1)^{\deg D}
\gamma}\big)$. We have ${\ma F}_q\big(\sqrt[l]{(-1)^{\deg D}\gamma}\big)=
{\ma F}_q\iff (-1)^{\deg D}\gamma\in ({\ma F}_q^{\ast})^l (\iff E=K)$. Consequently
${\ma F}_q\big(\sqrt[l]{(-1)^{\deg D}\gamma}\big)=
{\ma F}_{q^l}\iff (-1)^{\deg D}\gamma\notin ({\ma F}_q^{\ast})^l$. In any case
we have $EK=E$ or $EK=E_l=K_l$.

Now by (\ref{Eq5Co}) we have $[K_{{\eu {ge}}}:K]=[E_{{\eu {ge}}}:E]t$ where
\[
t=\deg\S K=
\begin{cases}
1&\text{if $\p$ is not inert in $K/k$},\\
l&\text{if $\p$ is inert in $K/k.$}
\end{cases}
\]

When $K=E$, that is, when $K\subseteq \lam D$, if $\chi$ is the
character of order $l$ associated to $K$, $\chi=\chi_{P_1}\cdots
\chi_{P_r}$, we consider $Y=\langle \chi_{P_i}\mid 1\leq i\leq r\rangle$.
The field associated to $Y$ is 
\[
F=k\big(\raizm 1,\ldots, \raizm r\big), 
\]
and $K_{{\eu {ge}}}=F$
if $l\nmid \deg D$ or if $l\mid \deg P_i$ for all $i$ (that is, $s=r$). This is 
because in the first case $\p$ is already ramified in $K$ and in the
second $\p$ is unramified in $F/k$ (Proposition \ref{P5.1.2}).

When $l\mid \deg D$ and $l\nmid \deg P_r$, $\p$ ramifies in $F/k$ and is
unramified in $E/k$. In this case $[F:E_{{\eu {ge}}}]=l$. Let $a_{s+1},\ldots,
a_{r-1}\in{\ma Z}$ be such that $l\mid \deg(P_mP_r^{a_m})$, that
is, $\deg P_m+a_m\deg P_r\equiv 0\bmod l$, $s+1\leq m\leq r-1$. Let
\[
F_1:=k\big(\raiz 1,\ldots, \raiz s, \sqrt[l]{P_{s+1}P_r^{a_{s+1}}},\ldots,
\sqrt[l]{P_{r-1}P_r^{a_{r-1}}}\big)\subseteq k(\Lambda_{P_1P_2\cdots P_r}).
\]
 Then $\S E$ decomposes in
$F_1/E$, $K\subseteq F_1\subseteq E_{{\eu {ge}}}$ and since
$F=F_1\big(\raizm r\big)$, we have $[F:F_1]=l$.
It follows that $E_{{\eu {ge}}}=F_1$.
We have
\begin{gather}\label{Eq5Co}
\g E{}=
\begin{cases}
F&\text{if $l\nmid \deg D$ or if $l \mid \deg P_r$,}\\
F_1&\text{if $l\mid \deg D$ and $l\nmid \deg P_r$.}
\end{cases}
\end{gather}

Let $\alpha:=(-1)^{\deg D}\gamma$. We have $E=K \iff \alpha\in\ne$
and $E\neq K\iff \alpha\notin \ne$. In particular, when $\alpha
\in\ne$ we have $\g E{}=\g K{}$. 

When $\alpha\notin\ne$, we
have
\[
EK=k(\sqrt[l]{(-1)^{\deg D} D},\sqrt[l]{\gamma D})=K(\sqrt[l]{\alpha})
=K_l\neq K.
\]

From Theorem \ref{T4.1} we obtain that if $\S K$ is not inert
in $EK/K$ then $\g K{}=\g E{}K$. When $\S K$ is inert 
in $EK/K$ then
$\g K{}=\g E{}^{H_1} K$ where $H_1$ is the inertia group
of $\p$ in $\g E{}/k$ .
This last case holds when $E\neq K$, that is, $\alpha\notin\ne$,
and, $\p$ is ramified in $E/k$, that is, $l\nmid \deg D$. Since $\p$
is unramified in $F_1/k$, it follows that $\g E{}^{H_1}=F_1$.
Combining this situation with (\ref{Eq5Co}), we obtain that
$\g K{}$ is equal to one of the following six cases:
\[
\begin{array}{lll}
F_1&\text{if $\alpha\in\ne$, $l\mid\deg D$ and $l\nmid \deg P_r$}&
\text{(this implies $\gamma\in\ne$)},\\
F&\text{if $\alpha\in\ne$ and $l\mid\deg P_r$}&\text{(this implies
$l\mid\deg D$ and $\gamma\in\ne$)},\\
F&\text{if $\alpha\in\ne$ and $l\nmid \deg D$},\\
F_1 K&\text{if $\alpha\notin\ne$, $l\mid\deg D$ and $l\nmid \deg P_r$}&
\text{(this implies $\gamma\notin \ne$)},\\
FK&\text{if $\alpha\notin\ne$ and $l\mid\deg P_r$}&\text{(this implies
$l\mid\deg D$ and $\gamma\notin\ne$)},\\
F_1K&\text{if $\alpha\notin\ne$ and $l\nmid \deg D$}&\text{(this
implies $l\nmid \deg P_r$)}.
\end{array}
\]

Therefore we have obtained

\begin{theorem}[G. Peng \cite{Pen2003}]\label{T5.1.6}
Let $D=P_1^{e_1}\cdots P_r^{e_r}\in R_T$ be a monic $l$--power free polynomial,
where $P_i\in R_T^+$, $1\leq e_i\leq l-1$, $1\leq i\leq r$. Let $0\leq s\leq r$
be such that $l\mid \deg P_i$ for $1\leq i\leq s$ and $l\nmid \deg P_j$
for $s+1\leq j\leq r$. Let $K:=k(\sqrt[l]{\gamma D})$ where $\gamma\in
\f$. Let $\alpha:=(-1)^{\deg D}\gamma$ and $a_{s+1},\ldots,a_{r-1}\in{\ma Z}$
satisfying $\deg P_m+a_m\deg P_r\equiv 0\bmod l$, $s+1\leq m\leq r-1$.
Then $K_{{\eu {ge}}}$ is given by:
\l
\item
$k\big(\raizm 1,\ldots, \raizm r\big)$ if $\alpha\in\ne$ and $l\nmid \deg D$.

\item
$k\big(\raiz 1,\ldots, \raiz s, \sqrt[l]{P_{s+1}P_r^{a_{s+1}}},\ldots,
\sqrt[l]{P_{r-1}P_r^{a_{r-1}}}\big)$ if $\alpha\in \ne$, $l\mid\deg D$, and
$l\nmid \deg P_r$.

\item
$k\big(\sqrt[l]{\gamma},\raiz 1,\ldots, \raiz r)$ if $l\mid\deg P_r$.

\item 
$k\big(\sqrt[l]{\gamma D}, 
\raiz 1,\ldots, \raiz s, \sqrt[l]{P_{s+1}P_r^{a_{s+1}}},\ldots,
\sqrt[l]{P_{r-1}P_r^{a_{r-1}}}\big)$ if $\alpha\notin\ne$
and $l\nmid \deg P_r$. \hfill\qed
\end{list}
\end{theorem}

\subsection{Artin--Schreier extensions}\label{S5.2}

Consider $K:=k(y)$ where $y^p-y=\alpha\in k$. 
The equation can be 
normalized as:
\begin{equation}\label{Eq1}
y^p-y=\alpha=\sum_{i=1}^r\frac{Q_i}{P_i^{e_i}} + f(T),
\end{equation}
where $P_i\in R_T^+$, $Q_i\in R_T$, 
$\gcd(P_i,Q_i)=1$, $e_i>0$, $p\nmid e_i$, $\deg Q_i<
\deg P_i^{e_i}$, $1\leq i\leq r$, $f(T)\in R_T$,
with $p\nmid \deg f$ when $f(T)\not\in {\ma F}_q$.

We have that the finite primes ramified in $K/k$ are precisely
$P_1,\ldots,P_r$. With respect to $\p$ we have

\begin{proposition}\label{P5.3}
The prime $\p$ is
\l
\item decomposed if $f(T)=0$,
\item inert if $f(T)\in {\ma F}_q$ and $f(T)\not\in \wp({\ma F}_q):=
\{a^p-a\mid a\in {\ma F}_q\}$,
\item ramified if $f(T)\not\in {\ma F}_q$ (thus $p\nmid\deg f$).
\end{list}
\end{proposition}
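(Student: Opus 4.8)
The plan is to study the decomposition of $\p$ in $K=k(y)$, $y^p-y=\alpha$, by reducing to the completion $k_{\p}$ of $k$ at $\p$ and applying the local theory of Artin--Schreier extensions. Recall that $\p$ is the pole divisor of $T$, so a uniformizer at $\p$ is $1/T$, and the residue field is ${\ma F}_q$. The behavior of $\p$ in $K/k$ is governed by the image of $\alpha$ in $k_{\p}/\wp(k_{\p})$, where $\wp(z)=z^p-z$; this is the content of local Artin--Schreier theory, which tells us that $\p$ is unramified in $K/k$ if and only if $\alpha$ is congruent modulo $\wp(k_{\p})$ to an element of the valuation ring $O_{\p}$, and in that case the extension is determined by the image of that element in the residue field ${\ma F}_q$ under the reduced Artin--Schreier map.

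First I would observe that in the normalized equation \eqref{Eq1}, the term $\sum_{i=1}^r Q_i/P_i^{e_i}$ lies in $O_{\p}$: each $P_i$ is a polynomial, hence a unit at $\p$, so each $Q_i/P_i^{e_i}$ is $\p$-integral. Thus modulo $\wp(k_{\p})$ we have $\alpha \equiv (\text{unit-valued part}) + f(T)$, and the only obstruction to $\p$-integrality comes from $f(T)$. If $f(T)=0$, then $\alpha\in O_{\p}$ and moreover $\alpha$ reduces to $0$ in the residue field ${\ma F}_q$ (since $\sum Q_i/P_i^{e_i}$ vanishes at $T=\infty$, i.e. at the prime $1/T$), so locally $y^p-y\equiv 0$, the local extension splits, and $\p$ decomposes — this gives (a). If $f(T)\in{\ma F}_q$ is a nonzero constant $c$, then $\alpha\equiv c\pmod{\wp(k_{\p})}$ with $c$ a constant; the local extension is ${\ma F}_q(y)/{\ma F}_q$ with $y^p-y=c$, which is the constant field extension ${\ma F}_{q^p}/{\ma F}_q$ precisely when $c\notin\wp({\ma F}_q)$, and trivial otherwise; so $\p$ is inert exactly when $f(T)\notin\wp({\ma F}_q)$, giving (b) (and when $f(T)\in\wp({\ma F}_q)$, $\p$ decomposes, which is consistent with (a) after a change of variable).

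For (c), suppose $f(T)\notin{\ma F}_q$, so $\deg f = d \geq 1$ with $p\nmid d$. Then $f(T)$ has a pole of order $d$ at $\p$, i.e. $v_{\p}(f(T)) = -d < 0$, while $\sum Q_i/P_i^{e_i}$ is $\p$-integral; hence $v_{\p}(\alpha) = -d$. The key point is that $-d$ is prime to $p$, so $\alpha$ is already in "reduced form" at $\p$: one cannot lower the pole order by subtracting an element of $\wp(k_{\p})=\{z^p-z\}$, since subtracting $z^p$ with $v_{\p}(z)<0$ produces a pole of order divisible by $p$. Therefore the Artin--Schreier conductor at $\p$ is $d+1\geq 2$, and $\p$ is (wildly) ramified in $K/k$. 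The main obstacle in writing this carefully is the standard but slightly fussy lemma that an Artin--Schreier class with representative of pole order prime to $p$ cannot be made integral — i.e. that the "normalized" pole order is an invariant of the class mod $\wp(k_{\p})$; I would either cite this from the literature on Artin--Schreier extensions or prove it directly by a short valuation argument showing that for any $z\in k_{\p}$ with $v_{\p}(z)<0$, $v_{\p}(\wp(z)) = p\, v_{\p}(z)$, so adding such a $\wp(z)$ to $\alpha$ either leaves $v_{\p}(\alpha)=-d$ (if $v_{\p}(z)\geq 0$ or $p|v_{\p}(z)$ already, impossible here) or changes it to a value that is still negative and still $\equiv -d\not\equiv 0 \pmod p$ only if no cancellation of the leading term occurs — one checks the leading terms cannot cancel because $p\nmid d$. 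This completes the three cases.
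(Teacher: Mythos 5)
The paper gives no argument for this proposition at all --- it is stated with a \qed\ as standard, going back to Hasse's local theory of Artin--Schreier extensions --- so your job was to supply the folklore proof, and your outline is the right one: pass to the completion $k_{\p}$, note that the class of $\alpha$ in $k_{\p}/\wp(k_{\p})$ decides everything, that an integral representative gives an unramified extension which splits or is inert according to the residue class in ${\ma F}_q/\wp({\ma F}_q)$, and that a representative with pole order prime to $p$ cannot be made integral, forcing (wild) ramification. Your valuation lemma for (c) --- for $v_{\p}(z)<0$ one has $v_{\p}(\wp(z))=p\,v_{\p}(z)$, which can never equal $-d$ because $p\nmid d$, so the pole of order $d$ cannot be cancelled --- is exactly the standard argument and is fine as sketched.

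One assertion, however, is wrong as written, and it sits precisely where the normalization (\ref{Eq1}) enters: you claim each $P_i$ is ``a polynomial, hence a unit at $\p$''. This is false; $\p$ is the pole divisor of $T$, so $v_{\p}(P_i)=-\deg P_i<0$ and every nonconstant polynomial has a pole there. The correct reason the partial-fraction part is $\p$-integral (indeed vanishes at $\p$, as you later assert parenthetically) is the degree bound in the normalization, $\deg Q_i<\deg P_i^{e_i}$, which gives $v_{\p}\bigl(Q_i/P_i^{e_i}\bigr)=e_i\deg P_i-\deg Q_i>0$. This is not cosmetic: without that bound the terms $Q_i/P_i^{e_i}$ could themselves contribute to the pole at $\p$ and parts (a) and (b) would be false. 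With that justification repaired, and the residue computation in (b) phrased carefully ($\alpha\equiv f(T)\bmod \wp(k_{\p})$ by Hensel's lemma applied to the part of positive valuation, and $X^p-X-c$ over ${\ma F}_q$ is irreducible or splits completely according as $c\notin\wp({\ma F}_q)$ or $c\in\wp({\ma F}_q)$), your proof is complete.
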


We study two cases.

\medskip

\noindent
{\bf Case 1}: We assume that $\p$ is not ramified, so $f(T)\in{\ma F}_q$.
Recall that $K_p=K{\ma F}_{q^p}$.
We have $\Gal(K_p/k)\cong C_p\times C_p$. 
The $p+1$ fields of degree $p$
over $k$ contained in $K_p$ are: $k(y+\beta_i)$, $1\leq i\leq p$ and $k_p$,
where $\big\{\beta_i\big\}_{i=1}^p$ is a basis of ${\ma F}_{q^p}$ over
${\ma F}_q$. By Proposition \ref{P5.3}, the unique such extension such
that $\p$ is not inert is the one $k(w)$ such that $w^p-w=\alpha-f(T)$. Thus
$E=k(w)$.

If $\chi$ is the character associated to $E$, then $\chi=\chi_{P_1}\cdots
\chi_{P_r}$ and the field associated to $\chi_{P_i}$ is $k(y_i)$,
where $y_i^p-y_i=\alpha_i:=\frac{Q_i}{P_i^{e_i}}$, $1\leq i\leq r$.
Therefore 
\begin{gather}\label{Eq3}
E_{{\eu {ge}}}=k(y_1,\cdots,y_r).
\end{gather}

Thus $K_{{\eu {ge}}}=E_{{\eu {ge}}}K=k(y_1,\ldots,y_r,\beta)$ with 
$\beta=0$ or $\beta^p-\beta
\not\in \wp({\ma F}_q)=\{x^p-x\mid x\in{\ma F}_q\}$.

\medskip

\noindent
{\bf Case 2}: Now consider the case $\p$ ramified in $K$. Set
$K_1:=k(\beta)$, $\beta^p-\beta=f(T)$, $p\nmid \deg f$. Let
$E:=k(w)$ where $w^p-w=\alpha-f(T)=\alpha_1=\sum_{i=1}^r
\frac{Q_i}{P_i^{e_i}}$. 
By Case 1, $E_{{\eu {ge}}}=k(y_1,\ldots,y_r)$. Therefore
$K_{{\eu {ge}}}=E_{{\eu {ge}}}K=k(y_1,\ldots,y_r,\beta)$.

We have proved

\begin{theorem}[S. Hu and Y. Li \cite{HuLi2010}]\label{T5.3.2}
Let $K=k(y)$ be given by
{\rm{(\ref{Eq1})}}. Then $K_{{\eu {ge}}}=k(y_1,\ldots,y_r,\beta)$, where
$y_i^p-y_i=\frac{Q_i}{P^{e_i}}$, $1\leq i\leq r$ and
$\beta^p-\beta=f(T)$.
\end{theorem}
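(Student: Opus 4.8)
The plan is to derive Theorem~\ref{T5.3.2} from Theorem~\ref{T4.3} (the general non-tame case) together with the analysis already carried out in Cases~1 and~2 preceding the statement, so the work is really to assemble the pieces and check the ramification bookkeeping. First I would fix the Kronecker--Weber data: an Artin--Schreier extension $K=k(y)$ with $y^p-y=\alpha$ as in~(\ref{Eq1}) sits inside $_n\lam N{}_m$ for suitable $N,m,n$, where $N$ is built from the primes $P_1,\dots,P_r$ (which carry the finite ramification), $m=1$ since no constant extension is forced by an Artin--Schreier equation over ${\ma F}_q$, and $L_n$ absorbs the wild ramification of $\p$ coming from the polynomial part $f(T)$. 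Thus the relevant instance of Theorem~\ref{T4.3} has $F={_nK}\cap\lam N$ playing the role of the ``cyclotomic part'' of $K$, and $K_g=F_gK$.

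Next I would identify $F$ explicitly. Adjoining $L_n$ to $K$ kills the wild ramification of $\p$: concretely $_nK$ contains the field $k(w)$ with $w^p-w=\alpha-f(T)=\sum_{i=1}^r Q_i/P_i^{e_i}$, because $\beta$ with $\beta^p-\beta=f(T)$ already lies in $L_n$ (that is the field where $\p$ is totally wildly ramified), so $y-\beta=w$ up to an element of ${\ma F}_{q^p}$. Since $k(w)/k$ has $\p$ tamely ramified (indeed unramified or inert: Proposition~\ref{P5.3} applied with $f=0$, split case) and is ramified at exactly $P_1,\dots,P_r$, it is contained in $\lam N$; and a degree count shows $F={_nK}\cap\lam N=k(w)$ when $\p$ is not inert in $k(w)/k$, which is the generic situation, while the inert subcase only changes things by a constant extension that does not affect $K_g$. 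This is exactly the field called $E$ in Cases~1 and~2 above, so $F=E=k(w)$.

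Then I would invoke the computation of $E_g$ already given. Writing the order-$p$ character $\chi$ attached to $E=k(w)$ as $\chi=\chi_{P_1}\cdots\chi_{P_r}$ with $\chi_{P_i}$ the character of conductor a power of $P_i$ corresponding to $k(y_i)$, $y_i^p-y_i=Q_i/P_i^{e_i}$, Theorem~\ref{T3.6} (or its proof) gives $E_g=k(y_1,\dots,y_r)$ as in~(\ref{Eq3}): indeed $Y=\prod_{P\mid N}X_P=\langle\chi_{P_1},\dots,\chi_{P_r}\rangle$, and $Y_1=Y$ here because all $\chi_{P_i}$ are trivial on ${\ma F}_q^{\ast}$ (an Artin--Schreier character of conductor a power of $P_i$ has trivial restriction to the inertia at $\p$, which is ${\ma F}_q^{\ast}$, since $p\nmid q-1$). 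Hence $E_gK=k(y_1,\dots,y_r)\cdot k(y)=k(y_1,\dots,y_r,\beta)$ because $k(y)=k(\beta)\cdot k(w)$ and $k(w)\subseteq k(y_1,\dots,y_r)$; this last containment is clear since $w=\sum y_i$ up to normalization. Combining with $K_g=F_gK=E_gK$ from Theorem~\ref{T4.3} yields $K_g=k(y_1,\dots,y_r,\beta)$.

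The main obstacle, and the only point needing genuine care, is the identification $F={_nK}\cap\lam N$ together with $F_g=E_g$: one must be sure that adjoining $L_n$ strips off precisely the wild part at $\p$ and nothing more, i.e.\ that $_nK=L_n\cdot k(w)$ and that $k(w)$ is exactly the maximal subextension of $K$ (after the tame twist) lying in a cyclotomic field. In Case~1 (where $f(T)\in{\ma F}_q$ so $\p$ is unramified or inert, not wildly ramified) the decomposition of $K_p$ into its $p+1$ degree-$p$ subfields, exactly one of which ($k(w)$) has $\p$ non-inert, does this; in Case~2 ($f\notin{\ma F}_q$, $\p$ wildly ramified) one checks that $_nK$ contains $\beta$ hence contains $w=y-\beta$, and a degree comparison as in the proof of Theorem~\ref{T4.3} forces $F=k(w)$. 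Once that is pinned down, everything else is the formal manipulation of character groups and the already-proved Theorems~\ref{T4.3} and~\ref{T3.6}, so the proof is short.
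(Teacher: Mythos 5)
Your proposal is correct and follows essentially the same route as the paper: reduce to the cyclotomic piece $E=k(w)$, $w^p-w=\alpha-f(T)$, via Theorems \ref{T4.1}/\ref{T4.3} (the paper's Cases 1 and 2), compute $E_g=k(y_1,\ldots,y_r)$ from the $P_i$-components of the associated character (your observation that $Y_1=Y$ because $p\nmid q-1$ is exactly why the paper's identification works), and conclude $K_g=E_gK=k(y_1,\ldots,y_r,\beta)$. The only slip is the opening claim that $m=1$ always suffices --- when $f(T)\in{\ma F}_q\setminus\wp({\ma F}_q)$ the constant extension ${\ma F}_{q^p}$ is genuinely needed (the paper's Case 1 works inside $K_p$ for this reason) --- but you flag and absorb that subcase, so the argument matches the paper's.
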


\subsection{$p$--cyclic extensions}\label{S5.3}

This case is similar to Artin--Schreier's. Here we consider
$K=k(\vec{y})$ where $\vec{y}^p\Witt - \vec{y}=\vec {\beta}$, and
the operation is the Witt difference. The extension is a 
finite $p$--extension of degree less than or equal to $p^n$ where
$\vec{y}$ is of length $n$. Let $P_1,\ldots P_r$ be
the finite prime divisors ramified in $K/k$.

\begin{theorem}\label{T5.3.1} Let $K/k$ be a cyclic extension of degree $p^n$
where $P_1,\ldots,P_r\in R_T^+$ and possibly $\p$, are the ramified prime
divisors. Then $K=k(\vec y)$ where
\[
\vec y^p\Witt -\vec y=\vec \beta={\vec\delta}_1\Witt + \cdots \Witt + {\vec\delta}_r
\Witt + \vec\mu,
\]
with $\beta_1^p-\beta_1\notin\wp(k)$,
$\delta_{ij}=\frac{Q_{ij}}{P_i^{e_{ij}}}$, $e_{ij}\geq 0$, $Q_{ij}\in R_T$
and if $e_{ij}>0$, then $p\nmid e_{ij}$, $\gcd(Q_{ij},P_i)=1$ and 
$\deg (Q_{ij})<\deg (P_i^{e_{ij}})$, and $\mu_j=f_j(T)\in R_T$ with
$p\nmid \deg f_j$ when $f_j\not\in {\ma F}_q$.
\end{theorem}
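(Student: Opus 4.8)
The plan is to transcribe the Artin--Schreier argument of Subsection~\ref{S5.2} into the Witt-vector setting, the only extra work being the bookkeeping forced by the fact that Witt addition is not componentwise. By Artin--Schreier--Witt theory a cyclic extension $K/k$ of degree $p^n$ has the form $K=k(\vec y)$ with $\wp(\vec y):=\vec y^p\Witt -\vec y=\vec\beta$ for some $\vec\beta=(\beta_1,\ldots,\beta_n)\in W_n(k)$, and $[K:k]=p^n$ exactly when $\beta_1\notin\wp(k)$ (equivalently, when the first layer $k(y_1)$, $y_1^p-y_1=\beta_1$, has degree $p$ over $k$); this is the role of the condition recorded on $\beta_1$. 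Since the finite primes ramified in $K/k$ and the behaviour of $\p$ depend only on the images of $\vec\beta$ in $W_n$ of the completions of $k$, it suffices to produce, modulo $\wp W_n(k)$, a representative of $\vec\beta$ of the claimed shape and then to check that its support is the ramification locus.

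Next I reduce place by place. At each ramified finite prime $P_i$ I pass to the completion $k_{P_i}$ and apply the standard local normalization of Artin--Schreier--Witt vectors --- the length-$n$ analogue of {\rm{(\ref{Eq1})}}, carried out on the coordinates from the bottom up, which is legitimate because $\wp$ commutes with the Verschiebung shift and the carries in the Witt sum vanish on the coordinate being treated; modulo $\wp W_n(k_{P_i})$ this yields a Witt vector whose $j$-th coordinate is $Q_{ij}/P_i^{e_{ij}}$ with $\deg Q_{ij}<\deg P_i^{e_{ij}}$, $\gcd(Q_{ij},P_i)=1$ and $p\nmid e_{ij}$ when $e_{ij}>0$ (a pole-free coordinate being recorded with $e_{ij}=0$, $Q_{ij}=0$). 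These principal parts already lie in $k$, so they define $\vec\delta_i\in W_n(k)$, supported at $P_i$, and a nonzero reduced $\vec\delta_i$ is ramified at $P_i$. Reducing at $\p$ likewise produces $\vec\mu_0\in W_n(R_T)$ with polynomial coordinates $f_j(T)$ satisfying $p\nmid\deg f_j$ when $f_j\notin{\ma F}_q$; as in Proposition~\ref{P5.3}, $\p$ is unramified in the extension of $k_\p$ attached to $\vec\mu_0$ precisely when all the $f_j$ lie in ${\ma F}_q$.

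Finally I glue. The vectors $\vec\delta_i$ and $\vec\mu_0$ were chosen to absorb the ramification of $\vec\beta$ at $P_1,\ldots,P_r$ and at $\p$, so $\vec\beta':=\vec\beta\Witt -\vec\delta_1\Witt -\cdots\Witt -\vec\delta_r\Witt -\vec\mu_0$ is unramified at every place of $k$; since the only everywhere-unramified extension of ${\ma F}_q(T)$ is a constant field extension (by Riemann--Hurwitz, or because ${\ma P}^1_{{\ma F}_q}$ is simply connected), we obtain $\vec\beta'\equiv\vec c\pmod{\wp W_n(k)}$ for some constant $\vec c\in W_n({\ma F}_q)\subseteq W_n(R_T)$. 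Absorbing $\vec c$ into the infinite part and re-running the coordinatewise Artin--Schreier reduction inside $W_n(R_T)$ (which keeps the coordinates polynomial and restores $p\nmid\deg f_j$ for $f_j\notin{\ma F}_q$) gives $\vec\mu$ with $\vec\beta\equiv\vec\delta_1\Witt +\cdots\Witt +\vec\delta_r\Witt +\vec\mu\pmod{\wp W_n(k)}$, so one may take $K=k(\vec y)$ with $\wp(\vec y)$ equal to this Witt sum; the finite ramified primes are then the $P_i$ with $\vec\delta_i\neq\vec 0$, which by hypothesis are $P_1,\ldots,P_r$, while $\p$ is ramified exactly when some $f_j\notin{\ma F}_q$. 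I expect the main obstacle to be this gluing step: the local normalizations are each valid only modulo $\wp$ of a single completion, and assembling them into one congruence modulo $\wp W_n(k)$ requires the global input just used together with the routine but fiddly re-normalization of the polynomial part after $\vec c$ has been absorbed; by contrast the local reductions are a mechanical iteration of {\rm{(\ref{Eq1})}}, legitimate once the Witt-addition polynomials and the identity $\wp\circ V=V\circ\wp$ are in hand.
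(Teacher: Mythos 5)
Your overall strategy differs from the paper's: the paper first splits $\vec\beta$ into prime--supported pieces by taking partial fractions of the ghost components and pulling back through $\varphi^{-1}$, and only then normalizes each piece by Schmid's change of variables, whereas you extract locally normalized parts at each ramified place and use the global fact that an everywhere unramified extension of $k$ is a constant field extension to absorb the discrepancy into $\vec\mu$. That gluing half is sound. The gap is in your local step. You claim that, modulo $\wp W_n(k_{P_i})$, $\vec\beta$ is congruent to a Witt vector whose $j$-th coordinate is a normalized proper fraction $Q_{ij}/P_i^{e_{ij}}$, justified by the remark that ``the carries vanish on the coordinate being treated.'' This is not true as stated. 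First, already for $n=1$ a constant $c\in{\ma F}_q\setminus\wp({\ma F}_q)$ is congruent modulo $\wp(k_{P_i})$ to no proper fraction of pole order prime to $p$: the unramified residual classes at $P_i$ have no representative of your normal form (this part is harmless, since your gluing only needs $\vec\beta\Witt - \vec\delta_i$ to be \emph{unramified} at $P_i$). Second, and fatally for the argument as written, Witt addition is not coordinatewise: the bottom--up reduction leaves the coordinate currently treated alone but dumps carries into all higher coordinates. Hence the vector $\vec\delta_i=(\delta_{i1},\ldots,\delta_{in})$ assembled coordinatewise from the extracted principal parts need not absorb the ramification at $P_i$. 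Concretely, take $p=2$, $n=2$, $P_i=T$, $\beta_1=\frac1T+1$ (already reduced, principal part $\delta_{i1}=\frac1T$, integral remainder $u_1=1$), and write $\beta_2=\delta_{i2}+u_2$ with $u_2$ integral at $T$; then $(\beta_1,\beta_2)\Witt -(\delta_{i1},\delta_{i2})=(u_1,\,u_2+u_1\delta_{i1})$, whose second coordinate has a pole of order $1$ at $T$ coming from the carry, and one checks directly that a vector $(u_1,w)$ with $u_1$ a unit and $w$ of pole order prime to $p$ cannot lie in $W_2({\mathcal O}_T)+\wp W_2(k_T)$; so the difference is still ramified at $T$.

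If instead you define $\vec\delta_i$ as the shifted Witt sum $\delta_{i1}\Witt + V\delta_{i2}\Witt +\cdots\Witt + V^{n-1}\delta_{in}$, then $\vec\beta\Witt -\vec\delta_i$ is indeed unramified at $P_i$ and your gluing goes through, but now the carries sit inside the coordinates of $\vec\delta_i$ itself: they are still proper $P_i$-fractions, but their pole orders can be divisible by $p$, so the conditions $p\nmid e_{ij}$, $\gcd(Q_{ij},P_i)=1$ required by the theorem can fail in the higher coordinates. What is missing is precisely the renormalization of a Witt vector supported at a single prime: adjust from the bottom up by $\wp$ of $V^{j-1}$-shifted proper $P_i$-fractions, which keeps the piece supported at $P_i$, makes each coordinate either $0$ or of pole order prime to $p$, changes $\vec\beta$ only modulo $\wp W_n(k)$ (i.e.\ changes the generator $\vec y$), and does not disturb the other primes or the $\p$-part. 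This is exactly the normalization step the paper performs, citing Schmid, after its ghost--component splitting; the analogous renormalization is also needed for your finite pieces, not only for the polynomial part where you do mention it. Once that step is added, your local--global argument is a complete alternative proof.
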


\begin{proof}
We recall some facts on Witt vectors that we will need.
In general, for the ring $R:={\ma Q}[x_i,y_j,z_l]$ in the
variables $x_i,y_j,z_l$ we consider the ring $R_n$, $n\in{\ma N}$,
with the underlying set equal to $R^n$ and
with the operations $+,-,\cdot$ componentwise. Let
$R^n$ be the ring with underlying set the same $R^n$ and with the
following operations (Witt). Let $\varphi \colon R^n\to R_n$ be given by
$\varphi(a_1,\ldots,a_n)=\vWitt a$ where 
\[
a^{(m)}:=a_1^{p^{m-1}}+pa_2^{p^{m-2}}+\cdots+p^{m-1}a_m,
\quad m=1,\ldots,n.
\]
Then $\varphi$ is a bijective map with inverse $\psi\colon
R_n\to R^n$ given by $\psi \vWitt a=(a_1,\ldots,a_n)$
where
\[
a_m=\frac{1}{p^{m-1}}\Big(a^{(m)}-a_1^{p^{m-1}}-pa_2^{p^{m-2}}
-\cdots-p^{m-2}a_{m-1}^p\Big), \quad m=1,\ldots, n.
\]
The Witt operations $\Witt +, \Witt -$ and $\Witt {\cdot}$ on 
$R^n$ are given by
\[
a \Witt {\menos^+_{^{\cdot}}} b=
\Big(a^{\varphi}\menos^+_{^{\cdot}}
b^{\varphi}\Big)^{\varphi^{-1}}.
\]

Now we return to our case of congruence function fields.
Consider $K/k$ a cyclic extension
of degree $p^n$ given by $K:=k(\vec y)$, $\vec y^p\Witt -\vec y=\vec \beta$
with $\vec y \in W_n(K)$ a Witt vector of length $n$ in $K$ and
$\vec\beta\in W_n(k)$ a Witt vector of length $n$ in $k$.

Let $\vec \beta=(\beta_1,\ldots,\beta_n)$ be such that 
\begin{gather}\label{Eq2}
\beta_j=\sum_{i=1}^r \frac{Q_{ij}}{P_i^{e_{ij}}}+f_j(T), \text{\ where\ }
P_1,\ldots,P_r\in R_T^+, \big\{Q_{ij}\big\}_{1\leq i\leq r}^{1\leq j\leq n}
\subseteq R_T,\nonumber\\
f_j(T)\in R_T, e_{ij}\in{\ma N}\cup \{0\} \text{\ for all\ }
1\leq i\leq r \text{\ and\ } 1\leq j\leq n.
\end{gather}

Now when we apply $\varphi$ to $\vec \beta$ we 
obtain $\vWitt \beta$ and from the definition of $\beta^{(j)}$, we obtain
\begin{gather*}
\beta^{(j)}=\sum_{i=1}^r \frac{Q'_{ij}}{P_i^{e'_{ij}}} +f'_j(T)\text{\ for all\ }
1\leq j\leq n.\\
\intertext{We write}
\vec \beta=\vec {\gamma_1}+\cdots+\vec {\gamma_r}+\vec {\xi},\\
\vWitt \beta=\vWitt {\gamma_1}+\cdots+\vWitt {\gamma_r} +
\vWitt {\xi}\\
\intertext{with}
\gamma_i^{(j)}=\frac{Q'_{ij}}{P_i^{e'_{ij}}},\quad 1\leq i\leq r, 1\leq j\leq n
\text{\ and\ } \xi^{(j)}=f'_j(T).
\end{gather*}

When we apply $\varphi^{-1}$, we obtain
\[
(\beta_1,\ldots,\beta_n)=\vWitt {\beta}^{\varphi^{-1}} =
({\vec {\gamma}_1})^{\varphi^{-1}}\Witt +\cdots\Witt + ({\vec {\gamma}_r})^{
\varphi^{-1}}\Witt + ({\vec {\xi}})^{\varphi^{-1}}
\]
and each vector $({\vec {\gamma_i}})^{\varphi^{-1}}$ is of the form
$\Big(\frac{Q''_{i1}}{P_i^{e''_{i1}}},\cdots, \frac{Q''_{in}}{P_i^{e''_{in}}}\Big)$
and the vector $(\vec \xi)^{\varphi^{-1}}$ is of the form $\big(f''_1(T),\ldots,
f''_n(T)\big)$. In other words
\[
\vec \beta={\vec {\delta}}_1\Witt +\cdots\Witt + {\vec {\delta}}_r
\Witt + \vec \mu
\]
where the components of each ${\vec \delta}_i$ have poles at most
at $P_i$ and $\vec \mu$ has components with poles at most at $\p$.
Let ${\eu p}_i$ be the divisor corresponding to $P_i$.

Now each ${\vec\delta}$ and $\vec\mu$ can be normalized in such a 
way that each component $({\vec\delta}_i)_j:=\delta_{ij}$ has divisor
\begin{gather*}
\big(\delta_{ij}\big)_k=\frac{{\eu a}_{ij}}{{\eu p}_i^{\lambda_i}} \text{\ with\ }
\lambda_i\geq 0;
\text{\ if \ }\lambda_i=0, \text{\ then\ }v_{{\eu p}_i}({\eu a}_{ij})\geq 0;\\
\text{\ if\ } \lambda_i>0, \text{\ then\ } \gcd(p,\lambda_i)=1 \text{\ and\ }
v_{{\eu p}_{ij}}({\eu a}_{ij})=0,
\end{gather*}
and similarly for $\vec \mu$ with respect to $\p$ (see \cite[page 162]{Sch36}).
Indeed, the normalization can be obtained by the change of 
variable 
$y_{ij}\mapsto y_{ij}+\alpha_{ij}$, $1\leq i\leq r$, $1\leq j\leq n$,
where ${\vec y}_i=(y_{i1},\ldots, y_{in})$, ${\vec y}_i^p
\Witt - {\vec y}_i ={\vec \delta}_i$ and $\alpha_{ij}\in k$,
that corresponds to the substitution
$\delta_{ij}\mapsto \delta_{ij}+\alpha_{ij}^p-\alpha_{ij}$
and therefore the components obtained
have no poles other than ${\eu p}_i$. 
\end{proof}

Now we study the behavior of $\p$ in $K/k$.

\begin{proposition}\label{P5.3.2}
Let $K/k$ be given as in Theorem {\rm{\ref{T5.3.1}}}. Let 
$\mu_1=\cdots=\mu_s=0$, $\mu_{s+1}\in {\ma F}_q^{\ast}$, 
$\mu_{s+1}\not\in \wp({\ma F}_q)$ and finally, let $t+1$ be the
first index with $f_{t+1}\not\in{\ma F}_q$ (and therefore $p\nmid \deg f_{t+1}$).
Then the ramification index of $\p$ is $p^{n-t}$, the inertia degree
of $\p$ is $p^{t-s}$ and the decomposition number
of $\p$ is $p^s$. More precisely, if $\Gal(K/k)=\langle \sigma\rangle
\cong C_{p^n}$, then the inertia group of $\p$ is ${\eu I}=\langle \sigma^{p^t}
\rangle$ and the decomposition group of $\p$ is ${\eu D}=\langle
\sigma^{p^s}\rangle$.
\end{proposition}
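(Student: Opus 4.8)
The plan is to localize at $\p$ and climb the tower of subfields $k=K_0\subseteq K_1\subseteq\cdots\subseteq K_n=K$, where $K_j$ is the fixed field of $\langle\sigma^{p^j}\rangle$, so that $[K_j:k]=p^j$ and $K_j/k$ is the Artin--Schreier--Witt extension cut out by the length-$j$ truncation $(\beta_1,\dots,\beta_j)$ of $\vec\beta$. Fix a prime $\eu P$ of $K$ above $\p$; then $\Gal(\widehat K_{\eu P}/\widehat k_{\p})=\eu D$, where the hats denote completions and $\widehat k_{\p}\cong{\ma F}_q((1/T))$, and the invariants $e$, $f$ and $g=[K:k]/(ef)$ of $\p$ in $K/k$ are exactly those of $\widehat K_{\eu P}/\widehat k_{\p}$ (this is clean because $\Gal(K/k)$ is cyclic, so $\eu D,\eu I$ are the unique subgroups of $\langle\sigma\rangle$ of the appropriate orders). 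First I would observe that each $\vec\delta_i$ is regular at $\p$ and, after the normalization in the proof of Theorem~\ref{T5.3.1} (which can be arranged so that in addition $\vec\delta_i$ vanishes at $\p$), has all components in the maximal ideal $\eu m_{\p}$ of $\widehat k_{\p}$; since $\wp$ maps $W_n(\eu m_{\p})$ onto itself, this gives $\vec\beta\equiv\vec\mu\pmod{\wp W_n(\widehat k_{\p})}$ with $\vec\mu=(f_1,\dots,f_n)$. Hence $\widehat K_{\eu P}/\widehat k_{\p}$ is the Artin--Schreier--Witt extension attached to $\vec\mu$, and the local extension cut out by $K_j$ is the one attached to $(f_1,\dots,f_j)$.

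Next I would read off the three blocks of this local tower. For $j\le s$ the truncation $(f_1,\dots,f_j)$ is the zero Witt vector, so the corresponding local extension is trivial; thus $\p$ splits completely in $K_s/k$, whence $\eu D\subseteq\Gal(K/K_s)=\langle\sigma^{p^s}\rangle$ and $|\eu D|\le p^{n-s}$. For $s<j\le t$ the truncation $(f_1,\dots,f_t)=(0,\dots,0,\mu_{s+1},\dots,\mu_t)$ has all entries in ${\ma F}_q$, so it defines a constant field extension of $\widehat k_{\p}$; its degree is the order of this vector in $W_t({\ma F}_q)/\wp W_t({\ma F}_q)\cong{\ma Z}/p^t$, which equals $p^{t-s}$ because its first nonzero coordinate is $\mu_{s+1}\in{\ma F}_q^{\ast}\setminus\wp({\ma F}_q)$, sitting in position $s+1$. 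So $\p$ is unramified in $K_t/k$ with residue degree $p^{t-s}$; in particular $\eu I\subseteq\langle\sigma^{p^t}\rangle$ and $f\ge p^{t-s}$. Finally, over the completion of $K_t$ at the prime below $\eu P$, which is ${\ma F}_{q^{p^{t-s}}}((1/T))$, the remaining part of the tower is the Artin--Schreier--Witt extension whose first layer is $f_{t+1}(T)$ up to an additive constant, a function of pole order $\deg f_{t+1}$ prime to $p$; by Schmid's analysis \cite{Sch36} of Witt extensions in normal form (the same one used in the proof of Theorem~\ref{T5.3.1}), each of the remaining $n-t$ layers is then wildly ramified, so this extension is totally ramified of degree $p^{n-t}$. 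Therefore $e=p^{n-t}$, and $\eu I$ is the unique subgroup of $\langle\sigma\rangle$ of that order, namely $\eu I=\langle\sigma^{p^t}\rangle$.

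To conclude I would combine the bounds: $|\eu D|=ef\ge p^{n-t}\cdot p^{t-s}=p^{n-s}$, while the first block gives $|\eu D|\le p^{n-s}$; hence $|\eu D|=p^{n-s}$, forcing $\eu D=\langle\sigma^{p^s}\rangle$, $f=p^{t-s}$, and $g=p^n/|\eu D|=p^s$. For $n=1$ this recovers the trichotomy of Proposition~\ref{P5.3}.

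The main obstacle is the local Artin--Schreier--Witt input used in the second paragraph: (i) that a constant Witt vector over ${\ma F}_q$ whose first nonzero coordinate lies in position $s+1$ and is not in $\wp({\ma F}_q)$ has order exactly $p^{t-s}$ in $W_t({\ma F}_q)/\wp W_t({\ma F}_q)$; and (ii) that a length-$(n-t)$ Witt vector whose leading coordinate has pole order prime to $p$ yields a tower in which every successive layer stays wildly ramified, i.e.\ that the normal form is inherited layer by layer. Both are part of the classical theory (Schmid \cite{Sch36}) already invoked in the proof of Theorem~\ref{T5.3.1}, but they are the substantive points; the rest is Galois correspondence together with the identity $efg=[K:k]$. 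One should also check the preliminary point that the value of each $\vec\delta_i$ at $\p$ can be absorbed into $\vec\mu$ without disturbing the stated normal form of $\vec\mu$.
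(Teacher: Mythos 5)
Your argument is correct and in substance follows the same route as the paper's own proof: both rest on the linear ordering of the subfields of a cyclic $p^n$--extension together with Schmid's normal--form analysis of the Witt vector at $\p$, which identifies $t+1$ as the first layer where $\p$ ramifies and $s+1$ as the first layer where it has inertia. You merely make explicit, via completions, what the paper compresses into its citations of \cite{Sch36} and Proposition~\ref{P5.3} — namely the local triviality of the $\vec{\delta}_i$ at $\p$ (so that only $\vec{\mu}$ matters), the order computation in $W_t({\ma F}_q)/\wp W_t({\ma F}_q)$ giving the inertia degree, and the $efg=p^n$ bookkeeping — which is a welcome filling--in of detail rather than a different method.
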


\begin{proof}
Since the extension $K/k$ is a cyclic extension of degree a power
of a prime, the inertia field is the first layer such that $\p$ ramifies.
The index of this first layer is $t+1$ (see \cite{Sch36}). On the
other hand, by the same reason, the decomposition field
is the first layer where $\p$ is inert and this is given by $s+1$
(Proposition \ref{P5.3}).
\end{proof}

Now ${\vec y}_i^p\Witt - {\vec y}_i={\vec\delta}_i$, $1\leq i\leq r$ and
$\vec z^p\Witt -\vec z=\vec\mu$. Note that $k(\vec y,{\vec y}_i)$
and $k(\vec y, \vec z)$ are unramified extensions of $k(\vec y)$.

We have $k(\vec y\Witt -\vec z)\subseteq
 k(\Lambda_N)$ for some $N\in R_T$
since by Proposition \ref{P5.3.2},
$\p$ is fully decomposed in $k(\vec y\Witt - \vec z)$.
Therefore $E=k(\vec y\Witt -\vec z)$ is contained in
a cyclotomic function field.

If $\chi$ is the character associated to $E$, then $\chi=\chi_{P_1}
\cdots\chi_{P_r}$, where each $\chi_{P_i}$ is of order $p^{n_i}$
with $n_i\leq n$. The field associated to $\chi_{P_i}$ is the
field contained in a cyclotomic function field such that $P_i$
is the only ramified prime and with the same ramification behavior
that the one of $P_i$ in $E/k$. Since in both cases this
ramification is completely determined by ${\vec \delta}_i$, it
follows that the field
associated to $\chi_{P_i}$ is $k({\vec y}_i)$. It follows that
$E_{{\eu {ge}}}=k({\vec y}_1,\ldots,{\vec y}_r)$ since $\p$ is fully decomposed.

Note that $Kk(\vec z)/K$ is unramified and $\S K$ decomposes fully.
It follows from Theorems \ref{T4.1} and
\ref{T4.4} that $K_{{\eu {ge}}}=E_{{\eu {ge}}}k(\vec z)$.

Therefore we have proved

\begin{theorem}\label{T5.3.3}
If $K/k$ is given as in Theorem {\rm{\ref{T5.3.1}}}, then
$K_{{\eu {ge}}}=k({\vec y}_1,\ldots,{\vec y}_r,\vec z)$
where ${\vec y}_i^p\Witt -{\vec y}_i ={\vec \delta}_i$, $1\leq i\leq r$
and ${\vec z}^p\Witt -\vec z=\vec\mu$.
\end{theorem}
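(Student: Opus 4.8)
The plan is to mimic the Artin--Schreier treatment of Subsection~\ref{S5.2} and then reduce to the cyclotomic case through the structure theorems of Section~\ref{S4}. I use the normalized decomposition $\vec\beta={\vec\delta}_1\Witt +\cdots\Witt +{\vec\delta}_r\Witt +\vec\mu$ of Theorem~\ref{T5.3.1}, where the components of ${\vec\delta}_i$ have poles only at $P_i$ and those of $\vec\mu$ only at $\p$. First I isolate the cyclotomic part of $K$: put ${\vec z}^p\Witt -\vec z=\vec\mu$ and set $E:=k(\vec y\Witt -\vec z)$. The Witt identities give $(\vec y\Witt -\vec z)^p\Witt -(\vec y\Witt -\vec z)=\vec\beta\Witt -\vec\mu={\vec\delta}_1\Witt +\cdots\Witt +{\vec\delta}_r$, whose components are regular at $\p$ and have poles only at $P_1,\ldots,P_r$; hence, by Proposition~\ref{P5.3.2} with the $\vec\mu$-part trivial (its indices $s,t$ then equal to $n$), $\p$ is fully decomposed in $E/k$. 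In particular $\p$ is unramified in $E/k$, and since a nontrivial constant extension would make the degree-one divisor $\p$ inert, $E/k$ is geometric; by Proposition~\ref{P3.4} we then have $E\subseteq\lam N$ for a suitable $N\in R_T$.

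Next I compute $E_g$ by Theorem~\ref{T3.6}. Let $\chi$ generate the Dirichlet character group $X$ of $E$, with canonical decomposition $\chi=\chi_{P_1}\cdots\chi_{P_r}$, each $\chi_{P_i}$ of conductor a power of $P_i$. Since ${\vec\delta}_i$ is precisely the $P_i$-part of $\vec\beta\Witt -\vec\mu$, the field associated to $\langle\chi_{P_i}\rangle$ is $k({\vec y}_i)$ with ${\vec y}_i^p\Witt -{\vec y}_i={\vec\delta}_i$; hence $Y=\prod_{P\mid N}X_P=\langle\chi_{P_1},\ldots,\chi_{P_r}\rangle$ has associated field $k({\vec y}_1,\ldots,{\vec y}_r)$. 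Because no ${\vec\delta}_i$ has a pole at $\p$, the divisor $\p$ is unramified in $k({\vec y}_1,\ldots,{\vec y}_r)/k$, so $Y_1=Y$ in the notation of Theorem~\ref{T3.6} and that theorem gives $E_g=k({\vec y}_1,\ldots,{\vec y}_r)$.

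Finally I descend from $E$ back to $K$. As $\vec\mu$ has poles only at $\p$, the extension $Kk(\vec z)/K$ is unramified and $\S K$ splits completely in it (Proposition~\ref{P5.3.2} once more), while $K=k(\vec y)$ lies in $Ek(\vec z)$ with $E\subseteq\lam N$ and with all the wild ramification and the constant-field growth at $\p$ carried by $k(\vec z)$, i.e.\ by the $L_n$- and ${\ma F}_{q^m}$-layers appearing in Theorems~\ref{T4.1}, \ref{T4.3} and~\ref{T4.4}. Those theorems give $K_g=E_g K$, and since $\vec y\Witt -{\vec y}_1\Witt -\cdots\Witt -{\vec y}_r$ differs from $\vec z$ only by a Witt vector with entries in ${\ma F}_p$, we obtain $E_g K=k({\vec y}_1,\ldots,{\vec y}_r,\vec z)$, which is the asserted formula.

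The step I expect to be the main obstacle is the first one: one must check that the decomposition $\vec\beta={\vec\delta}_1\Witt +\cdots\Witt +{\vec\delta}_r\Witt +\vec\mu$ genuinely separates the local behaviour, i.e.\ that passing through the ghost map $\varphi$ and its inverse does not reintroduce a pole of ${\vec\delta}_i$ away from $P_i$ or of $\vec\mu$ away from $\p$. This is exactly what the Schmid-type normalization carried out in the proof of Theorem~\ref{T5.3.1} is designed to guarantee, and it is what makes $\p$ split completely in $E/k$; once this local picture is secured, the remaining two steps are routine applications of the genus-field machinery of Sections~\ref{S3} and~\ref{S4}.
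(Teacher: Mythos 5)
Your proposal matches the paper's own proof essentially step for step: set $\vec z^p\Witt -\vec z=\vec\mu$, take $E=k(\vec y\Witt -\vec z)$ (fully split at $\p$ by Proposition \ref{P5.3.2}, hence inside a cyclotomic field), read off $E_g=k({\vec y}_1,\ldots,{\vec y}_r)$ from the $P_i$--components of the character of $E$, and descend using that $Kk(\vec z)/K$ is unramified with $\S K$ fully decomposed together with the theorems of Section \ref{S4}. The only cosmetic difference is that you phrase the conclusion as $K_g=E_gK$ while the paper writes $K_g=E_gk(\vec z)$; since $E\subseteq E_g$, both equal $E_gk(\vec y,\vec z)=k({\vec y}_1,\ldots,{\vec y}_r,\vec z)$, so this is the same argument.
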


\begin{example}\label{E5.12}{\rm{
Let $k={\ma F}_3(T)$ and $K=k(\vec y)$ where
 $\vec y^3\Witt - \vec y=\vec {\beta}=
\big(\frac{1}{T}+1,\frac{1}{T+1}+T\big)$. Then the decomposition
prescribed in Theorem \ref{T5.3.1} is:
\[
\vec \beta=\Big(\frac{1}{T},\frac{T+1}{T^2}\Big)\Witt +
\Big(0,\frac{1}{T+1}\Big)\Witt + \big(1,T\big).
\]
Thus, if $\vec y_1^3\Witt -\vec y_1=\vec\delta_1
=\big(\frac{1}{T},\frac{T+1}{T^2}\big)$,
$\vec y_2^3\Witt -\vec y_2=\vec \delta_2=\big(0,\frac{1}{T+1}\big)$ and
$\vec z^3\Witt -\vec z=\vec \mu=\big(1,T\big)$, then
$K_{{\eu {ge}}}=k(\vec y_1,\vec y_2,\vec z)$.
}}
\end{example}

\bibliographystyle{model1-num-names}

\begin{thebibliography}{xx}

\bibitem{BaeKoo96} Bae, Sunghan; Koo, Ja Kyung,
Genus theory for function fields,
J. Austral. Math. Soc. Ser. A 60, no. 3 (1996) 301--310.

\bibitem{Cle92} Clement, Rosario, The genus field of an algebraic function field,
J. Number Theory 40, no. 3 (1992) 359--375.

\bibitem{Fro83} Fr\"ohlich, Albrecht, Central extensions, Galois groups and ideal class
groups of number fields, Contemporary Mathematics, 24, American Mathematical
Society, Providence, RI, 1983.

\bibitem{Gau1801} Gauss, Carl Friedrich, Disquisitiones arithmeticae,
1801.

\bibitem{Has51} Hasse, Helmut, Zur Geschlechtertheorie in quadratischen
Zahlk\"orpern, J. Math. Soc. Japan 3 (1951) 45--51.

\bibitem{Hay74}  Hayes, David R.,
Explicit class field theory for rational function fields,
Trans. Amer. Math. Soc. 189 (1974), 77--91.

\bibitem{HuLi2010} Hu, Su; Li, Yan, The genus fields of Artin--Schreier extensions,
Finite Fields Appl. 16, no. 4  (2010) 255--264.

\bibitem{Ish76} Ishida, Makoto, The genus fields of algebraic number fields,
Lecture Notes in Mathematics, Vol. 555, Springer-Verlag, Berlin-New York, 1976.

\bibitem{Leo53} Leopoldt, Heinrich W., Zur Geschlechtertheorie in abelschen
Zahlk\"orpern, Math. Nachr. 9 (1953) 351--362.

\bibitem{Pen2003} Peng, Guohua, The genus fields of Kummer function fields,
J. Number Theory 98, no. 2 (2003) 221--227.

\bibitem{Ros87} Rosen, Michael, The Hilbert class field in function fields,
Exposition. Math. 5, no. 4 (1987) 365--378.

\bibitem{Ros2002} Rosen, Michael, Number theory in function fields,
Graduate Texts in Mathematics, 210, Springer-Verlag, New York, 2002.

\bibitem{Sch36} Schmid, Hermann Ludwig, Zur Arithmetik
der zyklischen p-K\"orper, J. Reine Angew. Math. 
176 (1936) 161--167.

\bibitem{Vil2006} Villa Salvador, Gabriel Daniel, Topics in the theory of
algebraic function fields, Mathematics: Theory \& Applications. Birkh\"auser Boston,
Inc., Boston, MA, 2006.

\bibitem{Xia85} Zhang, Xianke, A simple construction of genus fields of
abelian number fields, Proc. Amer. Math. Soc. 94, no. 3 (1985) 393--395.

\end{thebibliography}

\end{document}